\documentclass[a4paper,12pt]{article}
\usepackage{ayv_paperstyle}
\bibliographystyle{abbrv}

\begin{document}
\title{Deterministic limit of mean field games associated with nonlinear Markov processes}
\author{Yurii Averboukh\footnote{Krasovskii Intitute of Mathematics and Mechanics UrB RAS and Ural Federal University, 16, S.Kovalevskaya, Yekaterinburg, Russia. e-mail: ayv@imm.uran.ru}}
\date{}
\maketitle
\begin{abstract}The paper is concerned with the deterministic limit of  mean field games with a nonlocal coupling. It is assumed that the dynamics of  mean field games are given by  nonlinear Markov processes.   This type of games includes stochastic mean field games as well as mean field games with finite state space.  We consider the limiting deterministic mean field game within the framework of minimax approach.  The concept of minimax solutions is close to the probabilistic formulation. In this case the Hamilton--Jacobi equation is considered in the minimax/viscosity sense, whereas the flow of probabilities is determined by the probability on the set of solutions of the differential inclusion associated with the Hamilton-Jacobi equation such that those solutions are viable in the graph of the minimax solution. The main result of the paper is the convergence (up to subsequence) of the solutions of the mean field games to the minimax  solution of a deterministic mean field game in the case when the underlying dynamics converge to the deterministic evolution.\end{abstract}

{\small \noindent{\bf Keywords:} mean field games, deterministic limit, minimax solutions.

\noindent{\bf MSC codes:} 91A23, 91A13, 49L20, 3E20}

\section{Introduction}
The  mean field game theory initiated independently by Lasry and Lions  (see \cite{Lions01}--\cite{lions_frans}) and Huang, Malham\'{e},
and Caines  (see \cite{Huang1}--\cite{Huang4}) provides a way to describe a control system
with a large number of independent players by studying the limit system when the number of players tends to infinity. It is assumed that the players are identical and the dynamics and reward of each player   depend on her state, her control and the distribution  of all the players.

There are several approaches to analysis of mean field games. A first way consists in studying the coupled system made of a Hamilton-Jacobi equation for the value function and a Kolmogorov equation for the law of probabilities. This approach was used for  stochastic mean field games (see  \cite{bensoussan_book},  \cite{Gomes}, \cite{Lions}, \cite{lions_frans} and reference therein), for deterministic mean field games (see \cite{Lions}, \cite{lions_frans}) and for the mean field games with a finite number of states (see \cite{Gomes_finite_state}).
The most general case when the dynamics of each player is controlled by a nonlinear Markov process is considered in the framework of this approach in \cite{kolokoltsev_mean_field}--\cite{Kolokoltstov_dgaa}.

The second approach is based on a probabilistic formulation of mean field games. In this case the Hamilton-Jacobi equation is replaced with the optimization problem whereas the probability law is given by the distribution of the states of the players choosing their optimal strategies. This approach was developed for stochastic mean field games in \cite{Carmona_1}--\cite{Carmona_3}, \cite{Lacker} and for deterministic mean field games in \cite{Gomes_extended}. Note that the probabilistic approach is convenient for the convergence problem. It is used in~\cite{Fischer},~\cite{Lacker} to analyse the limit behaviour of symmetric equilibria for $N$-player games.

The concept of minimax solutions proposed in \cite{averboukh_mfg} for the deterministic case is close to the probabilistic approach. In this case the Hamilton--Jacobi equation is considered in the minimax/viscosity sense, whereas the flow of probabilities is determined by the probability on the set of solutions of the differential inclusion associated with the Hamilton-Jacobi equation such that those solutions are viable in the graph of the minimax solution. The notion of minimax solution to mean field game goes back to the  minimax solution introduced for Hamilton-Jacobi PDE by Subbotin~\cite{Subb_book}. The definitions of minimax and viscosity solutions for Hamilton--Jacobi PDEs are equivalent. The minimax solutions  deal primarily with the case when the Hamiltonian enjoys sublinear growth.

The third approach developed in \cite{bensoussan_book}, \cite{bensoussan_master}, \cite{mean_field_convergence}, \cite{Carmona_master}, \cite{Gomes_extended}, \cite{Kolokoltstov_common} relies on consideration of the mean field game as a dynamical system in the space of probabilities. In the framework of this approach the study of mean field game is performed by examining the master equation that is a Hamilton--Jacobi equation involving probability as a state variable. This approach permits to prove that a limit of feedback Nash equilibria when the number of player tends to infinity is a solution of a mean field game \cite{mean_field_convergence}.

The paper is concerned with the deterministic limit of the mean field games. The deterministic limit of the second order mean field games was studied by Lasry and Lions in \cite{Lions}, and Lions in \cite{lions_frans} (see also \cite{cardal_lions}).
We consider the case when the dynamics are controlled by nonlinear Markov processes as exposed in  monograph~\cite{Kol_book}. These systems include processes described by
stochastic differential equations or continuous time Markov chains. 



The main result  of the paper is the convergence (up to subsequence) of the solutions of mean field games  with nonlocal coupling to the minimax solution of the first order mean field game  in the case when the corresponding dynamics governed by nonlinear Markov processes converge to the deterministic evolution.  As a corollary we obtain the equivalence of the minimax solutions and the solutions in the probabilistic sense for first-order mean field games.

The results of the paper are based on the relative compactness of solutions of mean field games. This idea was used in several papers to construct  solutions of  second-order mean field games with unbounded coefficients (see  \cite{carmona_lacker_del_common}, \cite{lacker_existence}). The  difference between those papers and this one is as follows. In \cite{carmona_lacker_del_common}, \cite{lacker_existence} the sequence of mean field games is constructed by the given mean field game with unbounded coefficients. Thus, the underlying probability spaces of all games coincide. Moreover, the mentioned papers are concerned only with the case of second-order mean field games. In the present paper we consider the mean field games with dynamics given by nonlinear Markov processes. 
Thus, the probability spaces for original mean field games include the sets of c\`{a}dl\`{a}g functions; whereas the solution of the limiting mean field game is determined by the probability on the set of continuous functions. To overcome these difficulties we introduce auxiliary stochastic processes governed by ODE under stochastic control policies borrowed  from solutions of the original mean field games. The main idea of the proof of the main result is to evaluate the distance between original and auxiliary processes and to show the relative compactness of the auxiliary stochastic processes.

The outline of the paper is as follows. Section~\ref{sect_prelim} provides main notations. In this section we introduce the definition of a solution to a mean field game system in the probabilistic sense for the case when the dynamics of each player is governed by a nonlinear Markov process. Additionally, we recall the definition of a minimax solution to a first-order mean field game system. In Section~\ref{sect_main}  the main result of the paper is formulated. In Section~\ref{sect_bounds} we introduce an auxiliary stochastic processes and obtain  the bounds for moments of the original and auxiliary stochastic processes. The distance between original and auxiliary stochastic processes is estimated in Section~\ref{sect_distance}. Finally, Section~\ref{sect_limit} provides the proof of the main result. It is based on the relative compactness of the auxiliary stochastic processes and on the analysis of the limiting probability on the set of trajectories. 

\section{Preliminaries}\label{sect_prelim}
\subsection{Main notations}
If $A$ is a Banach space, then denote by $\mathcal{P}(A)$ the set of probabilities on $A$. Further let  $\mathcal{P}^2(A)$ denote the set of probabilities $m$ on $A$ satisfying
$$\varsigma^2(m)\triangleq \int_A \|a\|^2m(da)<\infty.$$
Denote $$\varsigma(m)\triangleq \sqrt{\varsigma^2(m)}.$$
We consider the 2-Wasserstein distance between $m_1,m_2\in\mathcal{P}^2(A)$ i.e.
$$W_2(m_1,m_2)\triangleq \left(\inf\left\{\int_{A\times A}\|a_1-a_2\|^2\pi(d(a_1,a_2)):\pi\in \Pi(m_1,m_2)\right\}\right)^{1/2}. $$ Here $$\Pi(m_1,m_2)\triangleq \{\pi\in \mathcal{P}^2(A\times A):\pi(\Gamma\times A)=m_1(\Gamma),\ \ \pi(A\times \Gamma)=m_2(\Gamma)\}.$$

The space $\mathcal{P}^2(\mathcal{A})$  with the metric $W_2$ is a complete metric space \cite{ambrosio_gradient_flows}, \cite{Villani}.

A measurable function $\mu:[0,T]\rightarrow\mathcal{P}^2(\mathbb{R}^d)$ is called a flow of probabilities. Below we denote the set of flows of probabilities by $\mathcal{M}$.

If $(\Omega^1,\mathcal{F}^1)$, $(\Omega^2,\mathcal{F}^2)$ are measurable spaces, $h:\Omega^1\rightarrow \Omega^2$ is measurable, $m$ is a measure on $\Omega^1$, then $h_\# m$ denotes the push-forward measure on $\Omega^2$ i.e. for $\Gamma\in \mathcal{F}^2$,
$$(h_\# m)(\Gamma)=m(h^{-1}(\Gamma)). $$

We consider the space $\mathbb{R}^{d+1}$ as the set of pairs $w=(x,z)$, where $x\in\mathbb{R}^d$, $z\in\mathbb{R}$.
To simplify the notation put
$$\mathcal{C}\triangleq C([0,T],\mathbb{R}^{d+1}).$$
 Denote by $e_t$ the following projection from $\mathcal{C}$ onto $\mathbb{R}^d$ $$e_t(x(\cdot),z(\cdot))\triangleq x(t).$$

Note that if $\chi_1,\chi_2\in\mathcal{P}^2(\mathcal{C})$, then \begin{equation}\label{wass_estima}
W_2({e_t}_{\#}\chi_1,{e_t}_{\#}\chi_2)\leq W_2(\chi_1,\chi_2).
\end{equation}
Indeed , if $\pi\in \Pi(\chi_1,\chi_2)$, then
\begin{equation*}\begin{split}
\int_{\mathcal{C}\times \mathcal{C}}\|e_t(w_1(\cdot))&-e_t(w_2(\cdot))\|^2\pi(d(w_1(\cdot),w_2(\cdot)))\\&\leq \int_{\mathcal{C}\times \mathcal{C}}\left(\sup_{\tau\in [0,T]}\|w_1(\tau)-w_2(\tau)\|\right)^2\pi(d(w_1(\cdot),w_2(\cdot)). \end{split}\end{equation*}
Moreover,
\begin{equation*}
W_2^2({e_t}_{\#}\chi_1,{e_t}_{\#}\chi_2)\leq \inf_{\pi\in \Pi(\chi_1,\chi_2)}\int_{\mathcal{C}\times \mathcal{C}}\|e_t(w_1(\cdot)-e_t(w_2(\cdot))\|^2\pi(d(w_1(\cdot),w_2(\cdot)))
\end{equation*}
Thus, (\ref{wass_estima}) holds true.

\subsection{Stochastic mean field games}
For a parameter $n\in\mathbb{N}$, a probability $m\in\mathcal{P}^2(\mathbb{R}^d)$, and a constant control $u\in U$ let $L^n_t[m,u]$ be a generator of L\'{e}vy-Khintchine type, i.e.
\begin{equation}\label{L_n_def}
\begin{split}L^n_t[m,u]\varphi(x)=\frac{1}{2}&\langle G^n(t,x,m,u)\nabla,\nabla\rangle\varphi(x)+\langle f^n(t,x,m,u),\nabla\rangle \varphi(x)\\&+\int_{\mathbb{R}^d}[\varphi(x+y)-\varphi(x)-\langle y,\nabla \varphi(x)\rangle\mathbf{1}_{B_1}(y)]\nu^n(t,x,m,u,dy). \end{split}\end{equation} Here $U$ is a control space, $G^n(t,x,m,u)$ is a nonnegative symmetric matrix, $B_1$ denotes the unit ball  centered at the origin,   $\nu^n(t,x,m,u,\cdot)$ is a measure on $\mathbb{R}^d$ such that
$\nu^n(t,x,m,u,\{0\})=0$ and
$$\int_{\mathbb{R}^d} \min\{1,y^2\}\nu^n(t,x,m,u,dy)<\infty. $$

Let $\mathcal{D}^n\subset C^2(\mathbb{R}^d)$ be a Banach subspace such that $C_b^2(\mathbb{R}^d)\subset \mathcal{D}^n$ and, for any $\xi\in\mathbb{R}^d$, the functions $x\mapsto\langle \xi,x\rangle $, $x\mapsto \|x-\xi\|^2$ belong to  $\mathcal{D}^n$.

Note that the dynamics of second order mean field games can be expressed in the form (\ref{L_n_def}) with $\nu^n=0$ \cite{kolokoltsev_mean_field}. Analogously, pure jump processes are specified by the generator of  form (\ref{L_n_def}) with $f^n=0$, $G^n=0$ \cite{kolokoltsev_mean_field}.

Extending definition introduced in \cite[p. 135]{fleming_soner} to the mean field game case we say that a 6-tuple $(\Omega,\mathcal{F},\{\mathcal{F}_t\}_{t\in[s,T]},P,u,Y)$ is a control system on $[s,T]$ admissible  for a generator $L^n$ and a flow of probabilities $ \zeta$,  if
\begin{enumerate}
\item $(\Omega,\mathcal{F},\{\mathcal{F}_t\}_{t\in[s,T]},P)$ is a filtered probability space;
\item $u$ is a $\{\mathcal{F}_t\}_{t\in[s,T]}$-progressively measurable process with values in $U$; $Y$ is a $\{\mathcal{F}_t\}_{t\in[s,T]}$-adapted c\`{a}dl\`{a}g process with values in $\mathbb{R}^d$;
\item for any $\varphi\in\mathcal{D}^n$, the process
$$\varphi(Y(t))-\int_s^t L^n_\tau[\zeta[\tau],u(\tau)]\varphi(Y(\tau))d\tau $$ is a $\{\mathcal{F}_t\}_{t\in[s,T]}$-martingale.
\end{enumerate}
Below we assume the following condition.
\begin{list}{(A\arabic{tmp})}{\usecounter{tmp} \setcounter{tmp}{-1}}
\item The generator $L^n$ is such that, for any flow of probabilities $\zeta$, any measurable deterministic function $v:[s,T]\rightarrow U$ and any $(s,\xi)\in[0,T]\times\rd$, there exist a filtered probability space $(\Omega,\mathcal{F},\{\mathcal{F}_t\}_{t\in[s,T]},P)$ and a process $Y$ such that $Y(s)=\xi$ $P$-a.s. and $(\Omega,\mathcal{F},\{\mathcal{F}_t\}_{t\in[s,T]},P,v,Y)$ is admissible for $L^n$ and $\zeta$.
\end{list}
\begin{remark}
  The conditions on $G^n$, $f^n$ and $\nu^n$ which guarantee  assumption (A0) can be derived from \cite[Theorem 5.4.2, Theorem 5.5.1]{Kol_markov}. It suffices to assume that 
  \begin{itemize}
  \item the function
  \begin{multline*}(t,x,u,\xi)\mapsto \frac{1}{2}\langle G^n(t,x,m,u)\xi,\xi\rangle-i
  \langle f^n(t,x,m,u),\xi\rangle\\+\int_{\mathbb{R}^d}(1-e^{i\langle \xi,y\rangle}+i\mathbf{1}_{B_1}(y)\langle\xi,y\rangle)\nu^n(t,x,m,u,dy) \end{multline*} is continuous (here $i$ stays for imaginary unit);
  \item for any $C>0$,
  \begin{multline*}\sup_{t\in[0,T],x\in\mathbb{R}^d,m\in\mathcal{P}^2(\mathbb{R}^d),\varsigma(m)<C,u\in U}\Bigl(\frac{\|G^n(t,x,m,u)\|}{1+\|x\|^2}+ \frac{\|f^n(t,x,m,v(t))\|}{1+\|x\|}\\+\frac{\int_{B_1}y^2\nu^n(t,x,m,u,dy)}{1+\|x\|^2}+ \int_{\mathbb{R}^d\setminus B_1}\nu^n(t,x,m,u,dy) \Bigr)<\infty,\end{multline*}
 \item $$\sup_{t\in [0,T],x\in\mathbb{R}^d,m\in\mathcal{P}^2(\mathbb{R}^d),\varsigma(m)<C,u\in U}\int_{\mathbb{R}^d\setminus B_1}\ln(\|y\|)\nu^n(t,x,m,u,dy) <\infty.$$

\end{itemize}
\end{remark}
Given the generator $L^n$, we consider the following mean field game system \cite{kolokoltsev_mean_field}:
\begin{equation}\label{HJ_L_n}
  \frac{\partial V(t,x)}{\partial t}+\max_{u\in U}(L^n_t[\zeta[t],u]V(t,x)+g(t,x,\zeta[t],u))=0,
\end{equation}
\begin{equation}\label{kynetic_L_n}
\frac{d}{dt}\langle \phi,\zeta[t]\rangle=\langle L^n_t[\zeta[t],u^*(t,\cdot,\zeta[t], V(t,\cdot))]\phi,\zeta[t]\rangle\ \ \forall \phi\in\mathcal{D}^n,
\end{equation}
\begin{equation}\label{cond_L_n}
  V(T,x)=\sigma(x,\zeta[T]),\ \ \zeta[0]=m_0^n.
\end{equation}
Here $u^*(t,x,m,\varphi):[0,T]\times\rd\times\mathcal{P}(\mathbb{R}^d)\times\mathcal{D}^n\rightarrow U$ is such that
$$u^*(t,x,m,\varphi)={\rm argmax}\{L^n_t[m,u]\varphi(x)+g(t,x,m,u):u\in U\}. $$

We 
consider the probabilistic approach to mean field games. This approach was developed for  second-order mean field games  in \cite{Carmona_1}--\cite{Carmona_3}, \cite{lacker_existence} and for deterministic mean field games in \cite{Gomes}. The following definition extends the definitions of the mentioned papers to the case when the mean field game is associated with the nonlinear Markov processes.
\begin{definition}\label{def_probability} We say that  a function $V^n:[0,T]\times\rd\rightarrow \mathbb{R}$ and a flow of probabilities $\zeta^n$ solve system (\ref{HJ_L_n})--(\ref{cond_L_n}) in the probabilistic sense, if the following conditions hold true.
\begin{enumerate}[label=(\roman*)] 
	\item $\zeta^n[0]=m_0^n$.
	\item $V^n(s,\xi)$ is a value  of  the optimal control problem 
	\begin{equation}\label{objective_function}\text{maximize }\mathbb{E}\left[\sigma(Y(T),\zeta^n[T])+ \int_s^Tg(\tau,Y(\tau),\zeta^n[\tau],u(\tau))d\tau\right] \end{equation}
	over  all control systems $(\Omega,\mathcal{F},\{\mathcal{F}_t\}_{t\in[s,T]},P,u,Y)$ admissible  for the generator $L^n$ and the flow of probabilities $ \zeta^n$ such that $Y(s)=\xi$ $P$-a.s. Here $\mathbb{E}$ denotes the expectation according to $P$.
	\item  There exists a  control system on $[0,T]$ $(\Omega^n,\mathcal{F}^n,\{\mathcal{F}^n_t\}_{t\in[0,T]},P^n,u^n,Y^n)$  admissible for $L^n$ and $\zeta^n$  such that
      \begin{equation}\label{V_n_process}
       V^n(t,Y^n(t))+ \int_0^tg(\tau,Y^n(\tau),\zeta^n[\tau],u^n(\tau))d\tau\end{equation}
       is $\{\mathcal{F}^n_t\}_{t=0}^T$-martingale and
      \begin{equation}\label{V_n_law}\zeta^n[t]={\rm Law}(Y^n(t)),
      \end{equation}

\end{enumerate}

\end{definition}

\begin{remark} Condition (ii) of Definition \ref{def_probability} means that
	\begin{itemize}
	\item
	 for any control systems $(\Omega,\mathcal{F},\{\mathcal{F}_t\}_{t\in[s,T]},P,u,Y)$ admissible  for the generator $L^n$ and the flow of probabilities $ \zeta^n$ such that $Y(s)=\xi$ $P$-a.s., we have that
	 $$V(s,\xi)\geq \mathbb{E}\left[\sigma(Y(T),\zeta^n[T])+ \int_s^Tg(\tau,Y(\tau),\zeta^n[\tau],u(\tau))d\tau\right]; $$
	 \item for any $\delta>0$, one can find a control systems $(\Omega^{n,\delta}_{s,\xi},\mathcal{F}^{n,\delta}_{s,\xi},\{\mathcal{F}^{n,\delta}_{s,\xi,t}\}_{t\in[s,T]},P^{n,\delta}_{s,\xi},u^{n,\delta}_{s,\xi}, Y^{n,\delta}_{s,\xi})$ admissible  for the generator $L^n$ and the flow of probabilities $ \zeta^n$ such that $Y^{n,\delta}_{s,\xi}(s)=\xi$ $P^{n,\delta}_{s,\xi}$-a.s. satisfying the following condition:
	 	 $$V(s,\xi)\leq \mathbb{E}^{n,\delta}_{s,\xi}\left[\sigma(Y^{n,\delta}_{s,\xi}(T),\zeta^n[T])+ \int_s^Tg(\tau,Y^{n,\delta}_{s,\xi}(\tau),\zeta^n[\tau],u^{n,\delta}_{s,\xi}(\tau))d\tau\right]+\delta. $$
\end{itemize}
Here $\mathbb{E}$ (respectively, $\mathbb{E}^{n,\delta}_{s,\xi}$) stands for the  expectation according to the probability $P$ (respectively $P^{n,\delta}_{s,\xi}$).

      The control system $(\Omega^n,\mathcal{F}^n,\{\mathcal{F}^n_t\}_{t\in[0,T]},P^n,u^n,Y^n)$ is $\delta$-optimal for the considered optimization problem.
\end{remark}

The link between solution in probabilistic approach and classical solutions to mean field games is given in the following proposition.

\begin{proposition}\label{pl_consistence}Assume that $(V^n,\zeta^n)$ is a classical solution to system (\ref{HJ_L_n})--(\ref{cond_L_n}) and
 there exists a solution to the martingale problem specified by the operator
$ L^n_t[\zeta^n[t],u^*(t,\cdot,\zeta^n[t],V^n(t,\cdot))]:\mathcal{D}^n\rightarrow {C}(\mathbb{R}^d)$.
Then the pair $(V^n,\zeta^n)$ solves (\ref{HJ_L_n})--(\ref{cond_L_n}) in the probabilistic sense.
\end{proposition}
\begin{proof}
The proof is close to the proof of the verification theorem \cite[Theorem 8.1]{fleming_soner}.

First, notice that the condition (i) of Definition \ref{def_probability} is fulfilled.

Since $V^n$ is a solution to equation (\ref{HJ_L_n}) we have that  $V^n(s,\xi)$ is a value  for the optimization problem for the objective functional (\ref{objective_function}) over  6-tuples $(\Omega,\mathcal{F},\{\mathcal{F}_t\}_{t\in[s,T]},P,u,Y)$ those are control systems admissible  for the generator $L^n$ and the flow of probabilities $ \zeta^n$ such that $Y(s)=\xi$ $P$-a.s. 
This proves condition (ii). 

By condition of the Proposition there exist a probability space $(\Omega^n,\mathcal{F}^n,\{\mathcal{F}^n_t\}_{t\in [0,T]},P^n)$ and a stochastic process $Y^n(\cdot)$ such that, for any $\varphi\in\mathcal{D}^n$,
$$\varphi(Y^n(t))-\int_0^t L^n_\tau[\zeta^n[\tau],u^*(\tau,\cdot,\zeta^n[\tau],V^n(\tau,\cdot))]\varphi(Y^n(\tau))d\tau $$ is a $\{\mathcal{F}_t^n\}_{t\in[0,T]}$-martingale and ${\rm Law}(Y^n(0))=\zeta^n[0]$. Using (\ref{kynetic_L_n}), we obtain that
\begin{equation}\label{law_eq}
{\rm Law}(Y^n(t))=\zeta^n[t].
\end{equation}

Put \begin{equation}\label{intro:u_n_proposition}
u^n(t,\omega)\triangleq u^*(t,Y^n(t,\omega),\zeta^n[t],V^n(t,\cdot)).
\end{equation} Notice that
the 6-tuple $(\Omega^n,\mathcal{F}^n,\{\mathcal{F}^n_t\}_{t\in [0,T]},P^n,u^n,Y^n)$ is admissible for the generator $L^n$ and the flow of probabilities $\zeta^n$.
Furthermore, dynamic programming arguments give that, if $u_n$ is defined by (\ref{intro:u_n_proposition}), then (\ref{V_n_process}) is $\{\mathcal{F}^n_t\}_{t\in [0,T]}$-martingale. This and (\ref{law_eq}) yield condition (iii) of Definition \ref{def_probability}.

Therefore,  the pair $(V^n,\zeta^n)$ solves (\ref{HJ_L_n})--(\ref{cond_L_n}) in the probabilistic sense.
\end{proof}

\subsection{Deterministic mean field game}
Consider the deterministic mean field game system
\begin{equation}\label{HJ}
  \frac{\partial V}{\partial t}+H(t,x,\mu[t],\nabla V)=0,\ \ V(T,x)=\sigma(x,\mu[T]),
\end{equation}
\begin{equation}\label{kinetic}
  \frac{d}{dt}\mu[t]=\left\langle\frac{\partial H(t,x,\mu[t],\nabla V)}{\partial p},\nabla\right\rangle\mu[t], \ \ \mu[0]=m_0.
\end{equation} Here $p$ stands for the 4-th coordinate  in
\begin{equation*}
H(t,x,m,p)=\max\{\langle p,f(t,x,m,u)\rangle+g(t,x,m,u):u\in U\}.
\end{equation*} System (\ref{HJ}), (\ref{kinetic}) can be rewritten in the  form (\ref{HJ_L_n})--(\ref{cond_L_n}) for the generator
$$L^*_t[\mu,u]\varphi(x)=\langle f(t,x,\mu,u),\nabla \varphi(x)\rangle. $$ 

Note that, for a given flow of probabilities $\mu$, equation (\ref{HJ}) provides the value function of the following optimization problem:
\begin{equation}\label{mfg_def:payoff}
  \mbox{maximize }\sigma(x(T),\mu(T))+\int_{s}^Tg(t,x(t),u(t),\mu(t))dt
\end{equation}
\begin{equation}\label{mfg_def:conditions}
  \mbox{subject to }\frac{d}{dt}x(t)=f(t,x(t),\mu(t),u(t)),\ \ x(s)=\xi,\ \ x(t)\in\mathbb{R}^d,\ \ u(t)\in U.
\end{equation}

The  solution for optimization problem (\ref{mfg_def:payoff}), (\ref{mfg_def:conditions}) does not exist in the general case~\cite{Warga}. To guarantee the existence of the optimal trajectory we consider the relaxation of this problem based on differential inclusion.

Given a flow of probabilities $\mu$,  denote the set of solutions of the differential inclusion
$$(\dot{x}(t),\dot{z}(t))\in{\rm co}\{(f(t,x,\mu[t],u),g(t,x,\mu[t],u)):u\in U\},\ \ x(s)=\xi,\ \ z(s)=0$$
by ${\rm Sol}(\mu,s,\xi)$. Here $\mathrm{co}$ denotes the convex hull. Additionally, put
$${\rm Sol}_0(\mu)\triangleq \bigcup_{\xi\in\mathbb{R}^d}{\rm Sol}(\mu,0,\xi). $$

Under assumptions made in this paper (see conditions (A1)--(A5) below) the problem
\begin{equation}\label{mfg_def:relaxed_payoff}
  \mbox{maximize }\sigma(x(T),\mu(T))+z(T)
\end{equation}
\begin{equation}\label{mfg_def:relax_set}
  \mbox{subject to }(x(\cdot),z(\cdot))\in \mathrm{Sol}(\mu,s,\xi)
\end{equation} is a relaxation of original problem (\ref{mfg_def:payoff}), (\ref{mfg_def:conditions}). Moreover, any element of $\mathrm{Sol}(\mu,s,\xi)$ can be approximated by trajectories generated by usual controls. Denote by  $\mathcal{U}$ the set of measurable functions $v:[0,T]\rightarrow U$. If $s\in [0,T]$, $\xi\in\mathbb{R}^d$, $v\in \mathcal{U}$, then let  $x[\cdot,\mu,s,\xi,v]$ be a solution of the initial value problem
$$\dot{x}(t)=f(t,x(t),\mu[t],v(t)),\ \ x(s)=\xi. $$ Moreover, put
\begin{equation}\label{z_def}
z[t,\mu,s,\xi,v]=\int_s^t g(\tau,x[\tau,\mu,s,\xi,v],\mu[\tau],v(\tau))d\tau.
\end{equation}
 By \cite[Theorem VI.3.1]{Warga} we have that
\begin{equation*}
  {\rm Sol}(\mu,s,\xi)={\rm cl}\{(x[\cdot,\mu,s,\xi,v],z[\cdot,\mu,s,\xi,v]):v\mbox{ is measurable}\}.
\end{equation*} Here closure is taken in the space $\mathcal{C}=C([0,T],\mathbb{R}^{d+1})$. Moreover, Gronwall's inequality implies that
\begin{equation}\label{x_control_estima}
\|x[t,\mu,s,\xi,v]\|\leq \left(\|\xi\|+MT+MT\sup_t\varsigma(\mu[t])\right) e^{MT}.
\end{equation} 

Note that the relaxation based on differential inclusions is equivalent to the approaches based on measure-valued controls or on control measures.  This means that $(x(\cdot),z(\cdot))\in\mathrm{Sol}(\mu,s,\xi)$ if and only if there exists a measure-valued control $\gamma:[0,T]\rightarrow\mathcal{P}(U) $ such that
$$\dot{x}(t)=\int_Uf(t,x(t),\mu(t),u)\gamma(t,du),\ \ x(s)=\xi,$$ $$\dot{z}(t)=\int_Ug(t,x(t),\mu(t),u)\gamma(t,du),\ \ z(s)=0. $$ Here $\mathcal{P}(U)$ denotes the set of all probabilities on $U$. The equivalence between approaches based on measure-valued controls and on control measures can be find, for example, in \cite{averboukh_mfg}.

We use minimax solutions first proposed in \cite{averboukh_mfg}. Note that the first order mean field games can be considered within the framework of probabilistic approach. The advantage of the minimax solutions to mean field games is that they do not depend on the choice of the probability space and the representation of the Hamiltonian $H$.  The link between minimax solutions and solution in the probabilistic sense is given in Corollary 2 below.

The definition of a minimax solution to a deterministic mean field game involves the definition of a minimax solution to a Hamilton--Jacobi PDE.
There exist several (equivalent) definitions of a minimax solution \cite{Subb_book} to a Hamilton--Jacobi PDE. They are based either on viability theory or on  nonsmooth analysis. For our purposes the definition involving optimization problem is more useful.

For $t\in [0,T]$, $x\in\mathbb{R}^d$, $m\in\mathcal{P}^2(\mathbb{R}^d)$ and $y\in \mathbb{R}^d$, put
$$H^*(t,x,m,y)=\sup_{p\in\mathbb{R}^n}[\langle  y,p\rangle-H(t,x,m,p)],$$ $$ \mathcal{H}(t,x,m)=\left\{\xi\in\mathbb{R}^n:H^*(t,x,m,\xi)<\infty\right\}. $$

The function $V$ is a \textit{minimax solution} to equation (\ref{HJ}) for the given flow of probabilities $\mu$, if
$V(s,\xi)$ is a value of the optimization problem
\begin{equation}\label{max_hamiltonian}
\mbox{maximize }\bigl[\sigma(x(T),\mu[T])+z(T)-z(s)\bigr]
\end{equation}
\begin{equation}\label{constrain_hamiltonian}
\mbox{subject to }\dot{x}(t)\in\mathcal{H}(t,x(t),\mu[t]),\ \ \dot{z}(t)\leq -H^*(t,x(t),\mu[t],\dot{x}(t)), \ x(s)=\xi.
\end{equation}

This is equivalent to the following property: $V(s,\xi)$ is a value of the optimization problem  (\ref{mfg_def:relaxed_payoff}), (\ref{mfg_def:relax_set}).

Denote
\begin{equation*}\begin{split}\mathcal{S}[V,\mu]\triangleq\{(x(\cdot)&,z(\cdot)):\dot{x}(t)\in \mathcal{H}(t,x(t),\mu[t]),\\
\dot{z}(t)&=-H^*(t,x(t),\mu[t],\dot{x}(t)), \ \ z(t)=V(t,x(t))\}. \end{split}\end{equation*}

\begin{definition}\label{def_solution}
We say that the pair $(V^*,\mu^*)\in C([0,T]\times\rd)\times\mathcal{M} $  is a minimax solution to  system (\ref{HJ}), (\ref{kinetic}), if
\renewcommand{\theenumi}{\arabic{enumi}}
\begin{enumerate}
  \item $V^*$ is a minimax solution of  equation (\ref{HJ});
  \item ${\mu}^*[0]=m_0 $
  \item there exists a probability $\chi$ on $\mathcal{C}$ such that $\mu^*[t]=e_t{}_\#\chi$ and $\mathrm{supp}(\chi)\subset \mathcal{S}[V^*,\mu^*]$.
 \end{enumerate}
\end{definition}
Note that the definition of the minimax solution to mean field game does not depend on the representation of the Hamiltonian.

The embedding of the set of optimal trajectories of  problem (\ref{mfg_def:relaxed_payoff}), (\ref{mfg_def:relax_set}) into the set $\mathcal{S}[V,\mu]$ implies the following statement.
\begin{proposition}\label{pl_minimax_suff}Assume that the pair $(V^*,\mu^*)\in C([0,T]\times\rd)\times\mathcal{M} $  satisfies the following conditions:
\begin{enumerate}
\item  for any $(s,\xi)\in[0,T]\times\rd$, $V^*(s,\xi)$ is a value  of the optimization problem~(\ref{mfg_def:relaxed_payoff}),~(\ref{mfg_def:relax_set});
  \item $\mu^*[0]=m_0$;
  \item there exists a probability $\chi\in\mathcal{P}^2(\mathcal{C})$ such that $\mu^*[t]=e_t{}_\#\chi$ and \begin{equation}\label{optimal_trajectories}\begin{split}{\rm supp}(\chi)\subset\{&(x(\cdot),z(\cdot))\in {\rm Sol}_0(\mu):\\&V^*(s,x(s))=\sigma(x(T),\mu^*[T])+z(T)-z(s), \ \ s\in[0,T]\}. \end{split}\end{equation}
\end{enumerate} Then $(V^*,\mu^*)$ is a minimax solution to system (\ref{HJ}), (\ref{kinetic}).
\end{proposition}

\section{Main result}\label{sect_main}

To simplify the notations put
$$\Sigma^n(t,x,m,u)\triangleq \sum_{i=1}^d G_{ii}^n(t,x,m,u)+\int_{\mathbb{R}^d}\|y\|^2\nu^n(t,x,m,u,dy), $$
$$b^n(t,x,m,u)\triangleq f^n(t,x,m,u)+\int_{\mathbb{R}^d\setminus B_1} y\nu^n(t,x,m,u,dy). $$

Below, if $\phi$ is a function of $k$ vectors and takes values in $\mathbb{R}$, then we assume that $L^n_t$ (respectively, $L^*_t$) acts only on the first variable. Using this convention, we get that, if $s,t\in [0,T]$, $s<t$, $\zeta$ is a flow of probabilities, $(\Omega,\mathcal{F},\{\mathcal{F}_t\}_{t\in [s,T]},P,X,u)$ is admissible for $L^n$ and $\zeta$,   $\eta_1,\ldots,\eta_k:\Omega\rightarrow\mathbb{R}^d$ are measurable w.r.t. $\mathcal{F}_s$,  $\phi:(\mathbb{R}^d)^{k+1}\rightarrow\mathbb{R}$ is such that $\phi(\cdot,x_1,\ldots,x_k)\in\mathcal{D}^n$, then
\begin{equation}\label{phi_s_t_martingale_L_n}\begin{split}
\mathbb{E}\phi(X(t),\eta_1,\ldots,\eta_k)=\mathbb{E}\phi&(X(s),\eta_1,\ldots,\eta_k)\\&+ \mathbb{E}\int_s^t L^n_\tau[\zeta[\tau],u(\tau)]\phi(X(\tau),\eta_1,\ldots,\eta_k)d\tau.
\end{split}\end{equation} Here $\mathbb{E}$ denote the expectation according to $P$. 

Analogous formula is fulfilled for the generator $L^*$ i.e. if $\mu$ is a flow of probabilities, $(\Omega,\mathcal{F},\{\mathcal{F}_t\}_{t\in [0,T]},P,X,u)$ is such that
$$\frac{d}{dt}X(t,\omega)=f(t,X(t,\omega),\mu[t],u(t,\omega)), $$ then \begin{equation}\label{phi_s_t_martingale_L_star}
\mathbb{E}\phi(X(t),\eta_1,\ldots,\eta_k)=\mathbb{E}\phi(X(s),\eta_1,\ldots,\eta_k)+ \mathbb{E}\int_s^t L^*_\tau\phi(X(\tau),\eta_1,\ldots,\eta_k)d\tau.
\end{equation}

Note that if $q(x_1,x_2)=\|x_1-x_2\|^2$, then
\begin{equation}\label{generator_qaud}
  (L^n_t[m,u]q)(x_1,x_2)= \Sigma^n(t,x_1,m,u)+2\langle b^n(t,x_1,m,u),x_1-x_2\rangle.
\end{equation}
Moreover, if $\lambda(x_1,x_2,x_3)=\langle x_1-x_2,x_3\rangle$,
\begin{equation}\label{generator_lin}
(L^n_t[m,u]\lambda)(x_1,x_2,x_3)=\langle  b^n(t,x_1,m,u),x_3\rangle,
\end{equation}
\begin{equation}\label{generator_det_lin}
(L^*_t[m,u]\lambda)(x_1,x_2,x_3)=\langle  f(t,x_1,m,u),x_3\rangle.
\end{equation}

 We put the following assumptions on the set $U$, functions $f^n$, $f$, $g$ and $\sigma$.

\begin{list}{(A\arabic{tmp})}{\usecounter{tmp}}
  \item $U$ is compact;
  \item There exists a function $\alpha(\cdot)$  such that $\alpha(\delta)\rightarrow 0$ as $\delta\rightarrow 0$ and, for any $t',t''\in[0,T]$, $x\in\mathbb{R}^d$, $m\in \mathcal{P}^2(\mathbb{R}^d)$, $u\in U$,
      $$\|f(t',x,m,u)-f(t'',x,m,u)\|\leq \alpha(t'-t''); $$
  \item\label{cond_lipschitz} there exists a constant $K$ such that, for any $t\in [0,T]$, $x',x''\in\mathbb{R}^d$, $u\in U$, $m',m''\in \mathcal{P}^2(\mathbb{R}^d)$,
  $$\|f(t,x',m',u)-f(t,x'',m'',u)\|\leq K(\|x'-x''\|+W_2(m',m'')), $$
  \item\label{cond_growth} there exists a constant $M$ such that,  for any $t\in [0,T]$, $x\in\mathbb{R}^d$, $m\in \mathcal{P}^2(\mathbb{R}^d)$, $u\in U$,
   $$\|f(t,x,m,u)\| \leq M(1+\|x\|+\varsigma(m)), $$
$$\|b^n(t,x,m,u)\| \leq M(1+\|x\|+\varsigma(m)), $$
$$|\Sigma^n(t,x,m,u)|\leq M(1+\|x\|^2+\varsigma^2(m)),$$
$$|g(t,x,m,u)| \leq M(1+\|x\|+\varkappa(\varsigma(m))), $$ where $\varkappa:[0,+\infty)\rightarrow [0,+\infty)$ is a strictly increasing function;
\item there exists a constant $R$ such that \begin{equation*}\begin{split}|\sigma(x',m')-\sigma(x'',m'')|&\leq R(\|x'-x''\|+W_2(m',m''))\\\cdot (1+&\|x'\|+\|x''\|+\varkappa(\varsigma(m'))+\varkappa(\varsigma(m''))), \end{split}\end{equation*}
\begin{equation*}\begin{split}|g(t,x',m',u)-g(t,x'',m'',u)|&\leq R(\|x'-x''\|+W_2(m',m''))\\\cdot (1+&\|x'\|+\|x''\|+\varkappa(\varsigma(m'))+\varkappa(\varsigma(m''))). \end{split}\end{equation*}
\end{list}

We assume that the generators $L^n$ converge to the generator of the deterministic mean field game $L^*$ in the following sense:
\begin{equation}\label{sigma_vanish}
 \sup_{t\in [0,T],x\in\mathbb{R}^d,m\in\mathcal{P}^2(\mathbb{R}^d)}\frac{ \Sigma^n(t,x,m,u)}{1+\|x\|^2+\varsigma^2(m)}\rightarrow 0\mbox{ as }n\rightarrow \infty;
\end{equation}
\begin{equation}\label{evolotion_converge}
      \sup_{t\in [0,T],x\in\mathbb{R}^d,m\in\mathcal{P}^2(\mathbb{R}^d)} \frac{\|b^n(t,x,m,u)-f(t,x,m,u)\|}{(1+\|x\|+\varsigma(m))} \rightarrow 0\mbox{ as }n\rightarrow \infty.
\end{equation}
Additionally, we assume that the initial distributions $m_0^n$ converge to  the probability $m_0$ in the 2-Wasserstein metric i.e.
   \begin{equation}\label{distrib_converge}
   W_2(m_0^n,m_0)\rightarrow 0\mbox{ as }n\rightarrow \infty.
   \end{equation}

The main result of the paper is the following.
\begin{theorem}\label{th_main}
Assume that  conditions (\ref{sigma_vanish})--(\ref{distrib_converge}) hold true and, for each natural~$n$, the  pair $(V^n,\zeta^n)\in C([0,T]\times\rd)\times\mathcal{M}$ solves the system (\ref{HJ_L_n})--(\ref{cond_L_n}) in the probabilistic sense.

Then there exist a pair $(V^*,\mu^*)\in C([0,T]\times\rd)\times\mathcal{M}$ that is a minimax solution to the system (\ref{HJ}), (\ref{kinetic})   and a sequence $\{n_l\}_{l=1}^\infty$ such that
\begin{enumerate}
  \item $\sup_{t\in[0,T]}W_2(\zeta^{n_l}[t],\mu^*[t])\rightarrow 0$ as $l\rightarrow \infty$;
  \item $$\lim_{l\rightarrow\infty}\sup_{(t,x)\in[0,T]\times\rd}\frac{|V^{n_l}(t,x)-V^*(t,x)|}{1+\|x\|^2}=0$$
\end{enumerate}
\end{theorem}
\begin{corollary}\label{coroll_class} If, for each natural number $n$, the pair $(V^n,\zeta^n)$ is a classical solution to system (\ref{HJ_L_n})--(\ref{cond_L_n}), then the conclusion of Theorem \ref{th_main} holds true.
\end{corollary}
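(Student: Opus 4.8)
The plan is to deduce the Corollary from Theorem~\ref{th_main} together with Proposition~\ref{pl_consistence}, reducing the classical-solution hypothesis to the probabilistic one already handled by the main theorem. First I would observe that a classical solution $(V^n,\zeta^n)$ of system~(\ref{HJ_L_n})--(\ref{cond_L_n}) lies in $C(\ir)\times\mathcal{M}$ and, by the very meaning of classical solution, has $V^n(t,\cdot)\in\mathcal{D}^n$, so that the term $L^n_t[\zeta^n[t],u]V^n(t,x)$ in~(\ref{HJ_L_n}) is well defined and the optimal feedback $u^*(t,x,\zeta^n[t],V^n(t,\cdot))$ is a genuine control law. Consequently the first hypothesis of Proposition~\ref{pl_consistence} holds for every $n$.

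Second, I would verify the remaining hypothesis of Proposition~\ref{pl_consistence}, namely the existence of a solution to the martingale problem associated with the operator $L^n_t[\zeta^n[t],u^*(t,\cdot,\zeta^n[t],V^n(t,\cdot))]$. After this feedback is substituted, the generator is again of L\'{e}vy--Khintchine type~(\ref{L_n_def}) with (now purely state-dependent) coefficients, and these coefficients inherit the continuity and growth structure encoded in (A1)--(A4) and in the Remark following assumption (A0). Hence the martingale problem is well posed by the same theory that guarantees (A0), i.e.\ by \cite[Theorem~5.4.2, Theorem~5.5.1]{Kol_markov}. Applying Proposition~\ref{pl_consistence} then yields that each pair $(V^n,\zeta^n)$ solves~(\ref{HJ_L_n})--(\ref{cond_L_n}) in the probabilistic sense.

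Finally, since the convergence conditions~(\ref{sigma_vanish})--(\ref{distrib_converge}) on the generators and initial data are standing assumptions that do not distinguish between classical and probabilistic solutions, all hypotheses of Theorem~\ref{th_main} are now in force for the family $\{(V^n,\zeta^n)\}_n$. Invoking Theorem~\ref{th_main} directly produces a subsequence $\{n_l\}$ and a minimax solution $(V^*,\mu^*)\in C(\ir)\times\mathcal{M}$ enjoying the two stated convergences, which is exactly the assertion of the Corollary.

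I expect the main obstacle to be the second step: assumption (A0) is stated for open-loop (deterministic) controls $v:[0,T]\to U$, whereas the operator appearing in Proposition~\ref{pl_consistence} carries the closed-loop feedback $u^*$. The work therefore lies in checking that, once the feedback law determined by the classical solution is inserted, the resulting coefficients still satisfy the structural requirements (continuity of the symbol and the growth bounds) guaranteeing solvability of the martingale problem. If, alternatively, one regards the definition of a classical solution as already incorporating the Markov flow that generates $\zeta^n$ through the kinetic equation~(\ref{kynetic_L_n}), this step becomes immediate and the Corollary follows at once.
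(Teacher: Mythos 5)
Your proposal follows essentially the same route as the paper, whose entire proof is that the Corollary ``directly follows from Theorem~\ref{th_main} and Proposition~\ref{pl_consistence}.'' Your second step — worrying about well-posedness of the martingale problem for the feedback generator $L^n_t[\zeta^n[t],u^*(t,\cdot,\zeta^n[t],V^n(t,\cdot))]$, which is a hypothesis of Proposition~\ref{pl_consistence} not literally covered by (A0) — is a legitimate point that the paper simply leaves implicit, so your treatment is, if anything, more careful than the original.
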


\begin{corollary}\label{coroll_link}{}
Any solution in the probabilistic sense is a minimax solution.

 If $(V^*,\mu^*)$ satisfies conditions of  Proposition \ref{pl_minimax_suff} and the set
 $\{(f(t,x,m,u),g(t,x,m,u)):u\in U\}$ is convex for all $t\in [0,T]$, $x\in\mathbb{R}^d$, $m\in\mathcal{P}^2(\mathbb{R}^d)$, then $(V^*,\mu^*)$ is a solution to system (\ref{HJ}), (\ref{kinetic}) in the probabilistic sense.
\end{corollary}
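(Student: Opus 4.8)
The statement contains two implications which I would establish by quite different means: the first is a direct specialization of the main theorem, while the second requires building a probabilistic solution by hand.

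For the first assertion I would apply Theorem~\ref{th_main} to the \emph{constant} sequence of generators $L^n\equiv L^*$. For the first-order operator $L^*$ one has $G^*=0$ and $\nu^*=0$, so that $\Sigma^n\equiv 0$ and $b^n\equiv f$; hence hypotheses (\ref{sigma_vanish}) and (\ref{evolotion_converge}) hold trivially, and taking $m_0^n\equiv m_0$ makes (\ref{distrib_converge}) hold as well. If $(V^*,\mu^*)$ solves (\ref{HJ}), (\ref{kinetic}) in the probabilistic sense, then the constant sequence $(V^n,\zeta^n)\equiv(V^*,\mu^*)$ solves (\ref{HJ_L_n})--(\ref{cond_L_n}) for $L^*$ in the probabilistic sense, so Theorem~\ref{th_main} produces a minimax solution realized as the limit of a subsequence. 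Since the subsequence is constant, the limit coincides with $(V^*,\mu^*)$, which is therefore a minimax solution.

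For the second assertion I would construct an admissible control process directly from the probability $\chi$ furnished by Proposition~\ref{pl_minimax_suff}. Take $\Omega=\mathcal{C}$, $P=\chi$, let $\{\mathcal{F}_t\}$ be the natural filtration, and set $Y(t)=e_t$. The decisive step is to recover an ordinary control: since the velocity set $\{(f(t,x,m,u),g(t,x,m,u)):u\in U\}$ is convex it coincides with its convex hull, so for $\chi$-a.e.\ path $(x(\cdot),z(\cdot))\in\mathrm{Sol}_0(\mu^*)$ one has $(\dot x(t),\dot z(t))\in\{(f(t,x(t),\mu^*[t],u),g(t,x(t),\mu^*[t],u)):u\in U\}$ for a.e.\ $t$. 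A measurable selection argument (Filippov's lemma), using compactness of $U$ from (A1) and the continuity of $f,g$, then yields an $\{\mathcal{F}_t\}$-progressively measurable process $u(t,\omega)\in U$ with $\dot Y(t)=f(t,Y(t),\mu^*[t],u(t))$ and $\dot z(t)=g(t,Y(t),\mu^*[t],u(t))$. As $L^*$ is purely first order, $Y$ solves this ODE pathwise, so $\varphi(Y(t))-\int_0^t L^*_\tau[\mu^*[\tau],u(\tau)]\varphi(Y(\tau))\,d\tau$ equals $\varphi(Y(0))$ and is a martingale, making the tuple admissible for $L^*$ and $\mu^*$.

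It would then remain to verify the three conditions of Definition~\ref{def_probability}. The law condition (\ref{V_n_law}) is exactly $\mu^*[t]=e_t{}_\#\chi$. For (\ref{V_n_process}) I would invoke the optimality embedding (\ref{optimal_trajectories}): along $\chi$-a.e.\ path $V^*(s,x(s))=\sigma(x(T),\mu^*[T])+z(T)-z(s)=\sigma(Y(T),\mu^*[T])+\int_s^T g(\tau,Y(\tau),\mu^*[\tau],u(\tau))\,d\tau$, so conditioning on $Y(s)=\xi$ returns $V^*(s,\xi)$. For the deviation inequality (\ref{V_n_deviation}) I would use that, by condition~1 of Proposition~\ref{pl_minimax_suff}, $V^*(s,\xi)$ is the value of problem (\ref{mfg_def:relaxed_payoff}), (\ref{mfg_def:relax_set}); any process admissible for $L^*$ is pathwise an ODE solution generating an element of $\mathrm{Sol}(\mu^*,s,\xi)$, whose payoff is bounded by $V^*(s,\xi)$, and taking expectations gives the inequality. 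The finiteness of the second moment needed to place $\chi$ in $\mathcal{P}^2(\mathcal{C})$ follows from the growth bound (A4) and Gronwall's inequality as in (\ref{x_control_estima}). The main obstacle is the measurable selection: extracting a progressively measurable $U$-valued control realizing the prescribed velocity along $\chi$-almost every path, which is precisely where convexity of the velocity set is indispensable.
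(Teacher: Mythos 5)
Your proposal is correct and follows essentially the same route as the paper: part one is obtained by feeding the constant sequence $L^n\equiv L^*$, $m_0^n\equiv m_0$ into Theorem~\ref{th_main}, and part two by taking $\Omega=\mathcal{C}$, $P=\chi$, $Y(t,\omega)=x(t)$ and using convexity of the velocity set to select an ordinary control $u_{x(\cdot),z(\cdot)}$ along each path. Your treatment is in fact somewhat more careful than the paper's, which asserts the pathwise control selection and the verification of Definition~\ref{def_probability} without spelling out the measurable-selection step you rightly identify as the crux.
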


\section{Uniform bounds for flows of probabilities }\label{sect_bounds}

We choose and fix the control system $(\Omega^n,\mathcal{F}^n,\{\mathcal{F}^n_t\}_{t\in [0,T]},P^n,u^n,Y^n)$ that is  admissible for the generator $L^n$ and the flow of probabilities $\zeta^n$. Furthermore, we assume that the tuple $(\Omega^n,\mathcal{F}^n,\{\mathcal{F}^n_t\}_{t\in [0,T]},P^n,u^n,Y^n)$ satisfies condition (iii) of Definition~\ref{def_probability}.  Below $\mathbb{E}^n$ denotes expectation corresponding to the probability $P^n$.

There exists (see \cite{Sznitman}) a $\{\mathcal{F}_t^n\}_{t\in [0,T]}$-adapted c\`{a}dl\`{a}g stochastic process  $X^n$    with values in $\mathbb{R}^d$ and continuous sample paths and a flow  $\mu^n$ such that
\begin{equation}\label{motion_eq}
\frac{d}{dt}X^n(t,\omega)=f(t,X^n(t,\omega),\mu^n[t],u^n(t,\omega)),\  \ X^n(0,\omega)=X_0^n(\omega),
\end{equation}
\begin{equation}\label{mu_eq}
  \mu^n[t]=\mathrm{Law}(X^n(t,\cdot)).
\end{equation} Here $X^n_0(\omega)\triangleq Y^n(0,\omega)$. Note that $\mathrm{Law}(X^n_0)=m_0^n$.
Further, put
\begin{equation}\label{running_payoff_def}
  \mathcal{X}^n(t,\omega)\triangleq \int_0^t g(\tau,X^n(\tau,\omega),\mu^n[\tau],u^n(\tau,\omega))d\tau.
\end{equation} Let $\mathrm{traj}^n:\Omega^n\rightarrow \mathcal{C}$ be defined by the following rule:
$$\mathrm{traj}^n(\omega)\triangleq (X^n(\cdot,\omega),\mathcal{X}^n(\cdot,\omega)). $$ Put
\begin{equation}\label{chi_eq}
  \chi^n\triangleq \mathrm{traj}^n{}_\# P^n.
\end{equation} We have that
\begin{equation}\label{mu_n_chi_n}
\mu^n[t]=e_t{}_\#\chi^n.
\end{equation}

Note that,
$$\varsigma^2(m_0^n)\leq 2W^2(m_0^n,m_0)+2\varsigma^2(m_0). $$ Therefore there exists a constant $M_0$ such that
$$\varsigma^2(m_0^n)\leq M_0. $$

\begin{lemma}\label{lm_zeta_bound}There exists a constant $C_1$ such that, for any $t\in [0,T]$,
$\varsigma^2(\zeta^n[t])\leq C_1$.
\end{lemma}
\begin{proof}
Formulas (\ref{phi_s_t_martingale_L_n}), (\ref{generator_qaud}) and condition (A4) yield that
\begin{equation*}
\begin{split}
  \mathbb{E}\|Y^n(t)\|^2=\mathbb{E}\|Y^n(0)\|^2+&\mathbb{E}\int_{0}^t(\Sigma^n(\tau,Y^n(\tau),\zeta^n[\tau],u^n(\tau))d\tau\\ +2&\mathbb{E}\int_{0}^t\langle b^n(\tau,Y^n(\tau),\zeta^n[\tau],u^n(\tau)),Y^n(\tau)\rangle d\tau\\
  \leq\varsigma^2(m_0^n)+M&\int_{0}^t\mathbb{E}(1+\|Y^n(\tau)\|^2+\varsigma^2(\zeta^n[\tau]))d\tau \\+2M&\int_{0}^t\mathbb{E} \left((1+\|Y^n(\tau)\|+\varsigma(\zeta^n[\tau]))\|Y^n(\tau)\|\right)d\tau\\
\leq   \varsigma^2(m_0^n)+M&\int_{0}^t\mathbb{E}(2+5\|Y^n(\tau)\|^2+2\varsigma^2(\zeta^n[\tau]))d\tau.
\end{split}
\end{equation*}
Since $\mathbb{E}\|Y^n(t)\|^2=\varsigma^2(\zeta^n[t])$ we get the inequality
$$\varsigma^2(\zeta^n[t])\leq \varsigma^2(m_0^n)+2Mt+7M\int_{0}^t\varsigma^2(\zeta^n[\tau])d\tau. $$ Using Gronwall inequality we get the conclusion of the Lemma with $C_1=(M_0+2MT)e^{7MT}$.
\end{proof}

\begin{lemma}\label{lm_mu_estima}There exists a constant $C_2$ such that, for any $t\in [0,T]$,
$\varsigma^2(\mu^n[t])\leq C_2.$
\end{lemma}
The proof of Lemma \ref{lm_mu_estima} is analogous to the proof of Lemma \ref{lm_zeta_bound}.

\begin{lemma}\label{lm_zeta_admiss} There exists a constant $C_3$ such that, if $s\in [0,T]$, $(\Omega,\mathcal{F},\{\mathcal{F}_t\}_{t\in [s,T]},P,u,Y)$ is a control system on $[s,T]$ admissible for the generator $L^n$ and the flow of probabilities $\zeta^n$,  then, for any $t\in [s,T]$,
$$\mathbb{E}\|Y(t)\|^2\leq C_3(1+\mathbb{E}\|Y(s)\|^2). $$
\end{lemma}
\begin{proof}
By (\ref{phi_s_t_martingale_L_n}), (\ref{generator_qaud}) and condition (A4) we have that
\begin{equation*}
\begin{split}
  \mathbb{E}\|Y(t)\|^2=\mathbb{E}\|Y(s)\|^2&+\mathbb{E}\int_{s}^t(\Sigma^n(\tau,Y(\tau),\zeta^n[\tau],u(\tau))d\tau\\ +2\mathbb{E}&\int_{s}^t\langle b^n(\tau,Y(\tau),\zeta^n[\tau],u(\tau)),Y(\tau)\rangle d\tau\\
  \leq\mathbb{E}\|Y(s)\|^2&+M\int_{s}^t\mathbb{E}(1+\|Y(\tau)\|^2+\varsigma^2(\zeta^n[\tau]))d\tau \\+2M&\int_{s}^t\mathbb{E} \left((\|Y(\tau)\|+\|Y(\tau)\|+\varsigma(\zeta^n[t]))\|Y(\tau)\|\right)d\tau\\
\leq   \mathbb{E}\|Y(s)\|^2&+M\int_{s}^t\mathbb{E}(2+5\|Y(\tau)\|^2+2\varsigma^2(\zeta^n[\tau]))d\tau.
\end{split}
\end{equation*} Using Lemma \ref{lm_zeta_bound}  and Gronwall's inequality, we obtain the conclusion of the Lemma with
$C_3\triangleq 2MT(1+C_1)e^{5MT}$.
\end{proof}

\begin{lemma}\label{lm_mu_admiss} There exists a constant $C_4$ such that, if $s\in [0,T]$, $(\Omega,\mathcal{F},\{\mathcal{F}_t\}_{t\in [s,T]},P)$ is a probability space, control $u$ is a $\{\mathcal{F}_t\}_{t\in [s,T]}$-progressively measurable, and $X$ is a stochastic process satisfying
\begin{equation}\label{deq:X_omega}
\frac{d}{dt}X(t,\omega)=f(t,X(t,\omega),\mu^n[t],u(t,\omega)),
\end{equation}
  then, for any $t\in [s,T]$,
$$\mathbb{E}\|X(t)\|^2\leq C_4(1+\mathbb{E}\|X(s)\|^2). $$
\end{lemma}
The proof of this Lemma is analogous to the proof of Lemma \ref{lm_zeta_admiss}.

\section{Distance between flows of probabilities}\label{sect_distance}
Without loss of generality we may assume that there exists a sequence $\{\varepsilon^n\}_{n=1}^\infty$ such that
\begin{itemize}
\item  $\varepsilon^n\leq 1$, $\varepsilon^n\rightarrow 0$ as $n\rightarrow\infty$;
\item
$$\sup_{t\in [0,T],x\in\mathbb{R}^d,m\in\mathcal{P}^2(\mathbb{R}^d)}\frac{ \Sigma^n(t,x,m,u)}{1+\|x\|^2+\varsigma^2(m)}\leq \varepsilon^n;  $$
\item
      $$\sup_{t\in [0,T],x\in\mathbb{R}^d,m\in\mathcal{P}^2(\mathbb{R}^d)} \frac{\|b^n(t,x,m,u)-f(t,x,m,u)\|}{1+\|x\|+\varsigma(m)}\leq\sqrt{\varepsilon^n};$$
\item $W_2^2(m_0^n,m_0)\leq {\varepsilon^n}$.
\end{itemize}

\begin{lemma}\label{lm_closeness}There exists a constant $C_5$ such that, for any $t\in [0,T]$,
$$W_2^2(\zeta^n[t],\mu^n[t])\leq C_5\varepsilon^n. $$
\end{lemma}
\begin{proof}
To simplify the designations let us introduce the following stochastic processes
 $$\widetilde{\Sigma}^n(\tau)=\Sigma^n(\tau,Y^n(\tau),\zeta^n[\tau],u^n(\tau)), $$
 $$
 \tilde{b}^n(\tau)\triangleq b^n(\tau,Y^n(\tau),\zeta^n[\tau],u^n(\tau))$$
 $$\hat{f}^n(\tau)\triangleq f(\tau,X^n(\tau),\mu^n[\tau],u^n(\tau)). $$

For $\theta,r\in [0,T]$, $\theta\geq r$ we have that
\begin{equation*}
\begin{split}
  \|Y^n(\theta)-X^n(\theta)\|^2\ \ \ \ \ \ &{}\\
  = \|(Y^n(\theta)-Y^n(r))&-(X^n(\theta)-X^n(r))+(Y^n(r)-X^n(r))\|^2 \\
  =\|Y^n(r)-X^n(r)\|&{}^2+\|Y^n(\theta)-Y^n(r)\|^2+\|X^n(\theta)-X^n(r)\|^2\\+
  2\langle Y^n(r)-X^n(&r),Y^n(\theta)-Y^n(r) \rangle-2\langle Y^n(r)-X^n(r),X^n(\theta)-X^n(r) \rangle\\
  -2\langle Y^n(\theta)-Y^n(&r),X^n(\theta)-X^n(r)\rangle
  \\\leq \|Y^n(r)-X^n(r)\|&{}^2+ 2\|Y^n(\theta)-Y^n(r)\|^2+2\|X^n(\theta)-X^n(r)\|^2\\+
  2\langle Y^n(r)-X^n(&r),Y^n(\theta)-Y^n(r) \rangle-2\langle Y^n(r)-X^n(r),X^n(\theta)-X^n(r) \rangle.
\end{split}
\end{equation*}
Hence,
\begin{equation}\label{YX_ts_expect_estima}
\begin{split}
  \mathbb{E}^n\|Y^n(\theta)-X^n&(\theta)\|^2\\
  \leq \mathbb{E}^n\|Y^n &(r)-X^n(r)\|^2
  +2\mathbb{E}^n \|Y^n(\theta)-Y^n(r)\|^2\\+2\mathbb{E}^n&\|X^n(\theta)-X^n(r)\|^2
  + 2\mathbb{E}^n\langle Y^n(r)-X^n(r),Y^n(\theta)-Y^n(r) \rangle\\-2\mathbb{E}^n&\langle Y^n(r)-X^n(r),X^n(\theta)-X^n(r) \rangle.
\end{split}
  \end{equation}

Now let us estimate $\mathbb{E}^n\|Y^n(\theta)-Y^n(r)\|^2$.  Recall that
 \begin{equation}\label{varsigma_expect_equality}
 \mathbb{E}^n\|Y^n(\theta)\|^2=\varsigma^2(\zeta^n[\theta]),\ \ \mathbb{E}^n\|X^n(\theta)\|^2= \varsigma^2(\mu^n[\theta]).
 \end{equation} 
Further, (\ref{phi_s_t_martingale_L_n}), (\ref{generator_qaud}) and condition (A4) imply that 
\begin{equation*}
\begin{split}
  \mathbb{E}^n\|&Y^n(\theta)-Y^n(r)\|^2\\
   &= \mathbb{E}^n\int_r^\theta \widetilde{\Sigma}^n(\tau) d\tau+2\mathbb{E}^n\int_r^\theta\langle \tilde{b}^n(\tau),Y^n(\tau)-Y^n(r) \rangle d\tau
  \\ &\leq
  \varepsilon^n \left((\theta-r)+2\int_r^\theta\varsigma^2(\zeta^n(\tau))d\tau\right)+2\mathbb{E}^n\int_r^\theta\langle \tilde{b}^n(\tau) ,Y^n(\tau)-Y^n(r) \rangle d\tau.
\end{split}
\end{equation*} Using Lemma \ref{lm_zeta_bound}, we get the inequality
\begin{equation}\label{Y_ts_first_estima}
  \mathbb{E}^n\|Y^n(\theta)-Y^n(r)\|^2
  \leq \varepsilon^nc_1'(\theta-r)+2\mathbb{E}^n\int_r^\theta\langle \tilde{b}^n(\tau) ,Y^n(\tau)-Y^n(r) \rangle d\tau,
\end{equation} where $c_1'=1+2C_1$. It follows from condition (A4) and Lemma \ref{lm_zeta_bound} that
\begin{equation}\label{f_estima}
\mathbb{E}^n\|\tilde{b}^n(\tau)\|^2\leq M^2(2+3\varsigma^2(\zeta^n[\tau])+3\mathbb{E}^n\|Y^n(\tau)\|^2)\leq M^2(2+6\varsigma^2(\zeta^n[\tau])) \leq c_2'.
\end{equation} Here $c_2'\triangleq M^2(2+6C_1)$.
 Therefore
$$\mathbb{E}^n\|Y^n(\theta)-Y^n(r)\|^2
  \leq (\varepsilon^nc_1'+c_2')(\theta-s)+\int_{r}^\theta\mathbb{E}^n\|Y^n(\tau,\cdot)-Y^n(r,\cdot)\|^2d\tau. $$
  Using Gronwall inequality, we obtain the following estimate
  $$\mathbb{E}^n\|Y^n(\theta)-Y^n(r)\|^2\leq c_3'(\theta-r). $$
Here constant $c_3'$ is   equal to $(c_1'+c_2')e^T$.  From this, (\ref{Y_ts_first_estima}) and (\ref{f_estima}) we get that
\begin{equation*}
\begin{split}
  \mathbb{E}^n\|Y^n(\theta)-&Y^n(r)\|^2
  \\\leq \varepsilon^n&c_1'(\theta-r)
  +2\int_r^\theta \sqrt{\mathbb{E}^n\| \tilde{b}^n(\tau)\|^2}\sqrt{\mathbb{E}^n\|Y^n(\tau)-Y^n(r)\|^2} d\tau
  \\ \leq \varepsilon^n&c_1'(\theta-r)+2\int_r^\theta\sqrt{c_2'}\sqrt{c_3'}\sqrt{\tau-r}d\tau.
\end{split}
\end{equation*} Therefore, we conclude that,
 for  $c_4'\triangleq 2\sqrt{c_2'}\sqrt{c_3'}/3$,
\begin{equation}\label{exp_Y_ts_estima}
\mathbb{E}^n\|Y^n(\theta)-Y^n(r)\|^2\leq \varepsilon^n c_1'(\theta-r)+c_4'(\theta-r)^{3/2}.
\end{equation}

Now let us estimate $\mathbb{E}^n\|X^n(\theta)-X^n(r)\|^2$.

By Jensen's inequality and Lemma \ref{lm_mu_estima} we have that $$\mathbb{E}^n\|X^n(\theta)\|\leq\sqrt{\mathbb{E}^n(\|X^n(\theta)\|^2)}=\varsigma(\mu^n[\theta])\leq \sqrt{C_2}. $$ Thus, we obtain that
\begin{equation}\label{exp_X_ts_estima}
\begin{split}
  E^n\|X^n(\theta)-&X^n(r)\|^2=\mathbb{E}^n\left\|\int_r^\theta f(\tau,X^n(\tau),\mu^n[\tau],u^n(\tau))d\tau\right\|^2 \\
  \leq  &M^2\int_r^\theta\mathbb{E}^n\left(1+\|X^n(\tau)\|+\varsigma(\mu^n[\tau])\right)^2d\tau\leq c_5'(\theta-r)^2.
  \end{split}\end{equation}
Here $c_5'\triangleq M^2(3+5C_2).$

Now let us estimate the remaining terms of the right-hand side of (\ref{YX_ts_expect_estima}).

Since the control system $(\Omega^n,\mathcal{F}^n,\{\mathcal{F}_t^n\}_{t\in[0,T]},P^n,u^n,Y^n)$ is admissible for $L^n$ and $\zeta^n$ by (\ref{phi_s_t_martingale_L_n}), (\ref{phi_s_t_martingale_L_star}), (\ref{generator_lin}), (\ref{generator_det_lin}) we have that
\begin{equation*}
\begin{split}
 \mathbb{E}^n\langle Y^n(r)-X^n(r),Y^n(\theta)-Y^n(r) \rangle-\mathbb{E}^n\langle Y^n(r)-X^n(r),X^n(\theta)-&X^n(r) \rangle\\= \mathbb{E}^n\int_r^\theta \langle Y^n(r)-X^n(r), \tilde{b}^n(\tau)-&\hat{f}^n(\tau)\rangle d\tau.
\end{split}
\end{equation*}

Further,
\begin{equation*}
\begin{split}
  \|\tilde{b}^n(\tau)-\hat{f}^n&(\tau)\|
  = \|b^n(\tau,Y^n(\tau),\zeta^n[\tau],u^n(\tau))-f(\tau,X^n(\tau),\mu^n[\tau],u^n(\tau))\|\\
   \leq \|b^n&(\tau,Y^n(\tau),\zeta^n[\tau],u^n(\tau))-f(\tau,Y^n(\tau),\zeta^n[\tau],u^n(\tau))\|\\
   &+\|f(\tau,Y^n(\tau),\zeta^n[\tau],u^n(\tau))-f(r,Y^n(r),\zeta^n[\tau],u^n(\tau))\|\\
   &+\|f(r,Y^n(r),\zeta^n[\tau],u^n(\tau))-f(r,X^n(r),\mu^n[\tau],u^n(\tau))\|\\
   &+   \|f(r,X^n(r),\mu^n[\tau],u^n(\tau))-f(\tau,X^n(\tau),\mu^n[\tau],u^n(\tau))\|
   \\ \leq \sqrt{\varepsilon^n}&(1+\|Y^n(\tau)\|+\varsigma(\zeta^n[\tau]))+2\alpha(\tau-r)\\&+
    K \|Y^n(r)-X^n(r)\|+KW_2(\zeta^n[\tau],\mu^n[\tau])
    \\&+K\|Y^n(\tau)-Y^n(r)\|+K\|X^n(\tau)-X^n(r)\|.
\end{split}
\end{equation*}
Hence, using  Lemma \ref{lm_zeta_bound} and estimates (\ref{exp_Y_ts_estima}), (\ref{exp_X_ts_estima}), we get
\begin{equation}\label{lin_estima_first}
\begin{split}\mathbb{E}^n\langle Y^n(r)-X^n(r),Y^n(\theta)-Y^n(r) \rangle-\mathbb{E}^n\langle Y^n(r)-X^n(r),X^n(\theta)-X^n&(r) \rangle\\
\leq \frac{5K+5}{2}(\theta-r)\|Y^n(r)-X^n(r)\|^2+\frac{K}{2}\int_r^\theta W^2_2(\zeta^n[\tau],&\mu^n[\tau])d\tau\\
+\varepsilon^n c_6'(\theta-r)+\alpha_1(\theta-&r)(\theta-r).
 \end{split}\end{equation} where $c_6'=1/2+C_1$  and $\alpha_1(\delta)=(\alpha(\delta))^2+K\left(c_1'\delta+c_4'\delta^{3/2}+c_5'\delta^2\right)/2. $ We have that $\alpha_1(\delta)\rightarrow 0$ as $\delta\rightarrow 0$.

Combining (\ref{YX_ts_expect_estima}) and (\ref{exp_Y_ts_estima})--(\ref{lin_estima_first}) we get the inequality
\begin{equation}\label{prefinal_estima_YX_ts}
\begin{split}
\mathbb{E}^n\|Y^n(\theta)-X^n(\theta)\|^2\hspace{100pt}&{}\\\leq (1+c_7'(\theta-r))\mathbb{E}^n\|Y^n(r)-X^n(r)&\|^2+ K\int_r^\theta W_2^2(\zeta^n[\tau],\mu^n[\tau])d\tau\\&+\alpha_2(\theta-r)(\theta-r)+\varepsilon^n c_8'(\theta-r)
\end{split}
\end{equation} for \begin{equation}\label{beta_def}
c_7'=5K+5,
\end{equation}  $c_8'=c_1'+c_6'$ and the function $\alpha_2(\delta)\triangleq \alpha_1(\delta)+c_4'\delta^{1/2}+c_5'\delta$. Note that  $\alpha_2(\delta)\rightarrow 0$ as $\delta\rightarrow 0$.

Now let $t\in [0,T]$. For a natural number $N$, set $t^i_N\triangleq it/N$. Applying inequality~(\ref{prefinal_estima_YX_ts}) sequentially for $r=t^i_N$, $\theta=t^{i+1}_N$ we conclude that
\begin{equation*}
\begin{split}
\mathbb{E}\|Y^n(t)-X^n(t)\|^2\leq e^{c_7' t}\mathbb{E}^n\|Y^n(0)-X^n(0)\|^2+  c_9'&\int_0^{t} W_2^2(\zeta^n[\tau],\mu^n[\tau])d\tau\\&+\varepsilon^n c_{10}'t+\alpha_3(T/N)t,
\end{split}
\end{equation*}
where $\alpha_3(\delta)=e^{c_7' T}\alpha_2(\delta)$, $c'_9=e^{c_7' T}K$, $c_{10}'=e^{c_7' T}c_8'$. Since $\mathbb{E}^n\|Y^n(0)-X^n(0)\|^2=0$ we have that
$$
\mathbb{E}\|Y^n(t)-X^n(t)\|^2\\\leq   c_9'\int_0^{t} W_2^2(\zeta^n[\tau],\mu^n[\tau])d\tau+\varepsilon^n c_{10}'t+\alpha_3(T/N)t.
$$ Passing to the limit as $N\rightarrow \infty$ we get the inequality
\begin{equation*}
\mathbb{E}\|Y^n(t)-X^n(t)\|^2\\\leq  c_9'\int_0^{t} W_2^2(\zeta^n[\tau],\mu^n[\tau])d\tau+\varepsilon^n c_{10}'t.
\end{equation*}
Since
$$W_2^2(\zeta^n[t],\mu^n[t])\leq \mathbb{E}\|Y^n(t)-X^n(t)\|^2,$$
we have that
$$W_2^2(\zeta^n[t],\mu^n[t])\leq  c_9'\int_0^{t} W_2^2(\zeta^n[\tau],\mu^n[\tau])d\tau+ \varepsilon^n c'_{10}t.$$
Using Gronwall inequality, we obtain the conclusion of the Lemma with $C_5=c_{10}'Te^{c'_9 T}$.

\end{proof}

The following Lemma is  concerned with the distance between admissible controlled process and the control system governed by the differential equation depending on the flow of probabilities $\mu^n$.

\begin{lemma}\label{lm_motions} There exists a constant $C_6$ such that, for any $s\in[0,T]$, $\xi\in\mathbb{R}^d$,
if
\begin{itemize}
\item the control system
$(\Omega,\mathcal{F},\mathcal{F}_{t\in[s,T]},{P},u,Y)$ is  admissible for $L^n$ and $\zeta^n$,
\item $Y(s)=\xi$ $P$-a.s.,
\item the process $X$ satisfies the condition
\begin{equation*}
\frac{d}{dt}X(t,\omega)=f(t,X(t,\omega),\mu^n[t],u(t,\omega)),\ \ X(s,\omega)=\xi,
\end{equation*}
\end{itemize}
then
$$\mathbb{E}\|Y(t)-X(t)\|^2\leq C_6\varepsilon_n(1+\|\xi\|^2). $$
Here $\mathbb{E}$ denotes the  expectation corresponding to probability $P$.
\end{lemma}
\begin{proof}
By Lemmas \ref{lm_zeta_admiss} and \ref{lm_mu_admiss} we get that there exist constants $c_1''$ and $c_2''$ such that, for any $\theta,r\in [0,T]$,
$$
\mathbb{E}\|Y(\theta)-Y(r)\|^2\leq c_1''[\varepsilon^n(1+\|\xi\|^2)+(\theta-r)^{1/2}](\theta-r),$$ $$\mathbb{E}\|X(\theta)-X(r)\|^2\leq c_2''(1+\|\xi\|^2)(\theta-r)^2.$$
 These inequalities can be deduced in the same way as inequalities (\ref{exp_Y_ts_estima}), (\ref{exp_X_ts_estima}).

For  $\theta,r\in [0,T]$, $\theta\geq r$, we have that
\begin{equation*}
\begin{split}
  \mathbb{E}\|Y(\theta)-&X(\theta)\|^2 \\
  \leq \mathbb{E}\|&Y(r)-X(r)\|^2+2\mathbb{E}\|Y(\theta)-Y(r)\|^2+2\mathbb{E}\|X(\theta)-X(r)\|^2\\
  &+  2\mathbb{E} \left\langle Y(r)-X(r),Y(\theta)-Y(r)\right\rangle -2\mathbb{E} \left\langle Y(r)-X(r),X(\theta)-x(r)\right\rangle.
\end{split}
\end{equation*}
Estimating the right-hand side as in the proof of Lemma \ref{lm_closeness}, using (\ref{phi_s_t_martingale_L_n}), (\ref{phi_s_t_martingale_L_star}), (\ref{generator_lin}), (\ref{generator_det_lin}) and Lemmas \ref{lm_zeta_bound}--\ref{lm_mu_admiss} we get that
\begin{equation*}
\begin{split}
  \mathbb{E}\|Y(\theta)-X(\theta)\|^2\hspace{100pt}&{} \\
  \leq (1+c_7'(\theta-r))\mathbb{E}^n\|Y(r)-X(r)&\|^2+ K\int_{s}^{t} W_2^2(\zeta^n[\tau],\mu^n[\tau])d\tau\\&+\varepsilon^n c_3''(1+\|\xi\|^2)(\theta-r)+\alpha_4(\theta-r,\xi).
\end{split}
\end{equation*} Here $c_7'$ is given by (\ref{beta_def}), $c_3''$ is a constant, $\alpha_4(\cdot,\cdot)$ is a function such that $\alpha_4(\delta,\xi)\rightarrow 0$ as $\delta\rightarrow 0$ for any $\xi\in\mathbb{R}^d$. Hence, by Lemma \ref{lm_closeness} we obtain the following estimate
\begin{equation}
\begin{split}\label{YX_diff_estima_t_prime}\mathbb{E}\|Y(\theta)-&X(\theta)\|^2
  \leq (1+c_7' (\theta-r))\mathbb{E}\|Y(r)-X(r)\|^2\\&+\alpha_4(\theta-r,\xi)(\theta-r) +\varepsilon^n (c_3''(1+\|\xi\|^2)+KC_5)(\theta-r). \end{split}\end{equation}
Now let $t\in[0,T]$, $t_N^i\triangleq i t/N$. Using the inequality (\ref{YX_diff_estima_t_prime}) sequentially, we get the inequality
\begin{equation*}\begin{split} \mathbb{E}\|&Y(t_N^i)-X(t_N^i)\|^2
  \leq e^{c_7' (t_N^i-t_N^0)}\mathbb{E}\|Y(t_N^0)-X(t_N^0)\|^2\\&+e^{c_7' (t_N^i-t_N^0)}\alpha_4(T/N,\xi)(t_N^i-t_N^0) +\varepsilon^n (c_3''(1+\|\xi\|^2)+KC_5)e^{c_7'(t_N^i-t_N^0)}(t_N^i-t_N^0).\end{split}\end{equation*}
Hence, $$ \mathbb{E}\|Y(t)-X(t)\|^2
  \leq e^{c_7' T}\alpha_4(T/N,\xi)T +\varepsilon^n (c_3''(1+\|\xi\|^2)+KC_5)e^{c_7' T}T.$$ Letting $N\rightarrow \infty$ we get the conclusion of the Lemma.
\end{proof}

\section{Limit flow of probabilities}\label{sect_limit}
This section is devoted to the proofs of Theorem \ref{th_main} and Corollaries \ref{coroll_class}, \ref{coroll_link}.

\begin{proof}[Proof of Theorem \ref{th_main}]

First, let us show that the family of probabilities $\{\chi^n\}_{n=1}^\infty$ is tight. Indeed, by (\ref{chi_eq}), (\ref{mu_n_chi_n})  $$\chi^n={\rm traj}^n{}_{\#} P^n, \ \ \mu^n[t]={e_t}_{\#}\chi^n=X(t,\cdot)_\# P^n.$$
This means that, for any continuous function $\varphi:\mathcal{C}\rightarrow \mathbb{R}$,
$$\int_{\mathcal{C}}\varphi(x(\cdot),z(\cdot))\chi^n(d(x(\cdot),z(\cdot)))= \mathbb{E}^n\varphi(X^n(\cdot),\mathcal{X}^n(\cdot)). $$
We have that
$$\mathbb{E}^n(\|X^n(0)\|^2+|\mathcal{X}^n(0)|^2)=\mathbb{E}^n\|X^n(0)\|^2\leq \varsigma^2(m_0^n). $$
 Further,
  $$\|X^n(t'')-X^n(t')\|\leq M\left(1+\sup_{t}\|X^n(t)\|+\sup_{t}\varsigma(\mu^n(t))\right)|t''-t'|. $$
  In addition,
  $$|\mathcal{X}^n(t'')-\mathcal{X}^n(t')|\leq M\left(1+\sup_{t}\|X^n(t)\|+\sup_{t}\varkappa(\varsigma(\mu^n(t)))\right)(t''-t'). $$
Since $\mathbb{E}\|X^n(t)\|\leq \sqrt{\mathbb{E}\|X^n(t)\|^2}=\varsigma(\mu^n[t])$ by Lemma \ref{lm_mu_estima} there exists a constant $c_1^*$ such that
  $$\mathbb{E}^n(\|X^n(t'')-X^n(t')\|+|\mathcal{X}^n(t'')-\mathcal{X}^n(t')|)\leq c_1^*|t''-t'|.$$

Therefore by \cite[Theorem 7.3]{Billingsley} the sequence $\{\chi^n\}_{n=1}^\infty$ is tight.

Now let us prove that $\{\chi^n\}$ have uniformly integrable second moments.  Condition (A\ref{cond_growth}) and Lemma \ref{lm_mu_estima} imply that there exists a constant $c_1^*$ such that, for any $\omega\in\Omega^n$,
$$\|X^n(\cdot,\omega)\|^2\leq c_1^*(1+\|X^n_0(\omega)\|^2). $$ Further, we have that
$$\|\mathcal{X}^n(\cdot,\omega)\|^2\leq c_2^*(1+\|X^n_0(\omega)\|^2). $$ Thus,
\begin{multline}\label{moment_bound}\int_{\|w(\cdot)\|^2\geq k}\|w(\cdot)\|^2\chi^n(d(w(\cdot)))\\=\int_{\Omega^n:\|X^n(\cdot,\omega)\|^2+\|\mathcal{X}^n(\cdot,\omega)\|^2\geq k}[\|X^n(\cdot,\omega)\|^2+\|\mathcal{X}^n(\cdot,\omega)\|^2]P^n(d\omega)\\\leq
\int_{\Omega^n:\|X^n_0(\omega)\|^2\geq k/(c_1^*+c_2^*)-1}(c_1^*+c_2^*)(1+\|X^n_0(\omega)\|^2)P^n(d\omega)\\=
\int_{\|\xi\|\geq k/(c_1^*+c_2^*)-1}(c_1^*+c_2^*)(1+\|\xi\|^2)m^n_0(d\xi).
 \end{multline} Since    $\{m_0^n\}$ converges to $m_0$ in 2-Wasserstein metric, we have that the sequence $\{m_0^n\}$ has uniformly integrable second moments. This and (\ref{moment_bound}) implies that $\{\chi^n\}$ have uniformly integrable second moments.

Using this  and tightness of $\{\chi^n\}$, we have that that there exist a sequence $n_l$ and a probability $\chi^*\in\mathcal{P}^2(\mathcal{C})$ such that
 \begin{equation}\label{measure_convergence}
 W_2(\chi^{n_l},\chi^*)\rightarrow 0\mbox{ as }l\rightarrow \infty.
\end{equation}
Denote  $\mu^*[t]\triangleq{e_t}_\# \chi^*$.
Further, let $V^*(s,\xi)$ be a value function of problem (\ref{mfg_def:relaxed_payoff}), (\ref{mfg_def:relax_set}) for $\mu=\mu^*$. Below we show that
\begin{list}{(\roman{tmp})}{\usecounter{tmp}}
\item  $\mu^{n_l}$, $V^{n_l}$ converge to $\mu^*$, $V^*$, respectively (see statements 1, 2 of the Theorem);
\item $(V^*,\mu^*)$ is a solution to mean field game (\ref{HJ}), (\ref{kinetic}).
\end{list}

 Inequality (\ref{wass_estima}) yields that
\begin{equation*}
 \sup_{t\in [0,T]}W_2(\mu^*[t],\mu^{n_l}[t])\rightarrow 0\mbox{ as }l\rightarrow \infty.
 \end{equation*}
This and Lemma \ref{lm_closeness} imply that 
\begin{equation}\label{mu_star_limit_zeta}
 \sup_{t\in [0,T]}W_2(\mu^*[t],\zeta^{n_l}[t])\rightarrow 0\mbox{ as }l\rightarrow \infty.
 \end{equation} Thus, statement 1 of the Theorem is proved.

To prove the second statement of the Theorem we introduce the auxiliary function
$\widehat{V}^n(s,\xi)$ that is equal to the value function of problem (\ref{mfg_def:relaxed_payoff}), (\ref{mfg_def:relax_set}) for $\mu=\mu^n$. Now let us estimate $|V^*(s,\xi)-\widehat{V}^n(s,\xi)|$. 

We have that
\begin{equation}\label{V_n_def}
\widehat{V}^n(s,\xi)=\sup\{\sigma(x[T,\mu^n,s,\xi,v],\mu^n[T])+z[T,\mu^n,s,\xi,v]:v\in\mathcal{U}\}, \end{equation}
\begin{equation}\label{V_star_def}
V^*(s,\xi)=\sup\{\sigma(x[T,\mu^*,s,\xi,v],\mu^*[T])+z[T,\mu^*,s,\xi,v]:v\in\mathcal{U}\}.
\end{equation}
Using Gronwall's inequality, condition (A3), and inequality (\ref{wass_estima}), we obtain, for every $v\in\mathcal{U}$, the following estimate:
\begin{equation}\label{estima:x_n_x_star}
\|x[t,\mu^n,s,\xi,v]-x[t,\mu^*,s,\xi,v]\|\leq KTW_2(\chi_n,\chi_*)e^{KT}.
\end{equation}
This, condition (A5), formula (\ref{z_def}), inequality (\ref{x_control_estima}) and Lemma \ref{lm_mu_estima} imply that there exists a constant $c_3^*$ such that
\begin{multline*}
  |(\sigma(x[T,\mu^n,s,\xi,v],\mu^n[T])+z[T,\mu^n,s,\xi,v])\\-(\sigma(x[T,\mu^*,s,\xi,v],\mu^*[T])+ z[T,\mu^*,s,\xi,v])|\\
  \leq c_3^*(1+\|\xi\|)W_2(\chi^n,\chi^*).
\end{multline*} This inequality and representations (\ref{V_n_def}), (\ref{V_star_def}) yield the following  estimate:
\begin{equation}\label{estima:V_n_V_star}
  |\widehat{V}^n(s,\xi)-V^*(s,\xi)|\leq c_3^*(1+\|\xi\|)W_2(\chi^n,\chi^*).
\end{equation}

Further, let us estimate $|\widehat{V}^n(s,\xi)-V^n(s,\xi)|$. To this end we consider a control system $(\Omega,\mathcal{F},\{\mathcal{F}_t\}_{t\in [s,T]},P,u,Y)$ admissible for $L^n$ and $\zeta^n$ such that $Y(s)=\xi$ P-a.s. Let, for each $\omega\in\Omega$, $X(\cdot,\omega)$ satisfy (\ref{deq:X_omega})  and initial condition $X(s,\omega)=\xi$.

Condition (A5) implies that
\begin{multline*}
|\sigma(X(T),\mu^n(T))- \sigma(Y(T),\zeta^n(T))|\\\leq
R(|X(T)-Y(T)|+W_2(\mu^n(T),\zeta^n(T)))\\
\cdot(1+\|X(T)\|+\|Y(T)\|+\varkappa(\varsigma(\mu^n[T]))+\varkappa(\varsigma(\zeta^n[T]))).
\end{multline*}
\begin{multline*}
  |g(t,X(t),\mu^n(t),u(t))-g(t,Y(t),\zeta^n(t),u(t))|\\\leq
  R(|X(t)-Y(t)|+W_2(\mu^n(t),\zeta^n(t)))\\ \cdot(1+\|X(t)\|+\|Y(t)\|+\varkappa(\varsigma(\mu^n[t]))+\varkappa(\varsigma(\zeta^n[t]))).
\end{multline*}
These inequalities and
Lemmas \ref{lm_zeta_admiss}--\ref{lm_motions} yield that, for some constant $c_4^*$, the following inequality holds true:
\begin{equation}\label{ineq:sigma_int_g}
\begin{split}
\mathbb{E}\Bigl|&\sigma(X(T),\mu^n(T))+\int_s^Tg(t,X(t),\mu^n(t),u(t))dt\\&- \sigma(Y(T),\zeta^n(T))-\int_s^Tg(t,Y(t),\zeta^n(t),u(t))dt\Bigr| \leq
c_4^*\sqrt{\varepsilon^n}(1+\|\xi\|^2).
\end{split}
\end{equation}

Further, by (\ref{motion_eq}) we have that
\begin{equation}\label{ineq:hat_V_n_expect}
\widehat{V}^n(s,\xi)\geq \mathbb{E}\left[\sigma(X(T),\mu^n(T))+\int_s^Tg(t,X(t),\mu^n(t),u(t))dt\right].
\end{equation} Recall that $V^n(s,\xi)$ is a value function of the optimization problem for (\ref{objective_function}) over the set of control systems on $[s,T]$ $(\Omega,\mathcal{F},\{\mathcal{F}_t\}_{t\in [s,T]},P,u,Y)$ admissible for $L^n$ and $\zeta^n$ such that $Y(s)=\xi$ P-a.s. Choose $(\Omega^{n,\delta}_{s,\xi},\mathcal{F}^{n,\delta}_{s,\xi}, \{\mathcal{F}^{n,\delta}_{s,\xi,t}\}_{t\in [0,T]},P^{n,\delta}_{s,\xi},u^{n,\delta}_{s,\xi},Y^{n,\delta}_{s,\xi})$ to be $\delta$-optimal for this problem. Further, let $X^{n,\delta}_{s,\xi}$ satisfy (\ref{deq:X_omega})  and initial condition $X(s,\omega)=\xi$. Denoting by $\mathbb{E}^{n,\delta}_{s,\xi}$ the expectation according to $P^{n,\delta}_{s,\xi}$, from (\ref{ineq:sigma_int_g}), (\ref{ineq:hat_V_n_expect}) we obtain that
\begin{equation*}
\begin{split}
\widehat{V}^n&(s,\xi)\\
&\geq \mathbb{E}^{n,\delta}_{s,\xi}\Bigl[\sigma(X^{n,\delta}_{s,\xi}(T),\mu^n(T))+ \int_s^Tg(t,X^{n,\delta}_{s,\xi}(t),\mu^n(t),u^{n,\delta}_{s,\xi}(t))dt\Bigr]\\
&\geq \mathbb{E}^{n,\delta}_{s,\xi}\Bigl[\sigma(Y^{n,\delta}_{s,\xi}(T),\zeta^n(T))+ \int_s^Tg(t,Y^{n,\delta}_{s,\xi}(t),\zeta^n(t),u^{n,\delta}_{s,\xi}(t))dt\Bigr]-c_4^*\sqrt{\varepsilon^n}(1+\|\xi\|^2)\\ &\geq V^n(s,\xi)-c_4^*\sqrt{\varepsilon^n}(1+\|\xi\|^2)-\delta.
\end{split}
\end{equation*}
Letting $\delta\rightarrow 0$,  we get the following:
\begin{equation}\label{ineq:V_n_V_hat_upper}
  \widehat{V}^n(s,\xi)\geq V^n(s,\xi)-c_4^*\sqrt{\varepsilon^n}(1+\|\xi\|^2).
\end{equation}

Now we turn to the opposite estimate. Let $v\in\mathcal{U}$ be a deterministic control. By condition (A0) there exists a filtered probability space $(\Omega,\mathcal{F},\mathcal{F}_{t\in [0,T]},P)$ and a stochastic process $Y$    such that $Y(s)=\xi$ $P$-a.s., and the control system $(\Omega,\mathcal{F},\mathcal{F}_{t\in [0,T]},P,v,Y)$ is admissible for $L^n$, $\zeta^n$. Note that 
\begin{equation}\label{ineq:E_v_V}
\mathbb{E}\left[\sigma(Y(T),\zeta^n(T))+\int_s^Tg(t,Y(t),\zeta^n(t),v(t))dt\right]\leq V^n(s,\xi). \end{equation}

In this case, if $X(\cdot,\omega)$ satisfies (\ref{deq:X_omega}) for $u=v$, and $X(s,\omega)=\xi$, then $X(t,\omega)=x[t,\mu^n,s,\xi,v]$, $$\int_s^Tg(t,X(t),\mu^n(t),v(t))dt=z[t,\mu^n,s,\xi,v].$$ Using this, inequalities (\ref{V_n_def}), (\ref{ineq:E_v_V}) and estimate (\ref{ineq:sigma_int_g}) for $u=v$, we get
\begin{equation}\label{ineq:V_n_V_hat_lower}
  \widehat{V}^n(s,\xi)\leq V^n(s,\xi)+c_4^*\sqrt{\varepsilon^n}(1+\|\xi\|^2).
\end{equation}

Combining this and (\ref{ineq:V_n_V_hat_upper}) we conclude that
\begin{equation*}
  |\widehat{V}^n(s,\xi)-V^n(s,\xi)|\leq c_4^*\sqrt{\varepsilon^n}(1+\|\xi\|^2).
\end{equation*}

This estimate, (\ref{estima:V_n_V_star}) and convergence of $W_2(\chi^{n_l},\chi^*)$ to zero imply the second statement of the Theorem.

Now we  prove that $(V^*,\mu^*)$ is a minimax solution to (\ref{HJ}), (\ref{kinetic}).
First, we show that the support of $\chi^*$  lies in the set ${\rm Sol}_0(\mu^*)$. Recall that ${\rm Sol}_0(\mu^*)$ is the set of functions $(x(\cdot),z(\cdot)):[0,T]\rightarrow \mathbb{R}^d$ satisfying
\begin{equation}\label{limit_incl}
(\dot{x}(t),\dot{z}(t))\in {\rm co}\{(f(t,x(t),\mu^*[t],u),g(t,x,\mu^*[t],u)):u\in U\}.
\end{equation}

Let $(x^*(\cdot),z^*(\cdot))\in {\rm supp}(\chi^*)$. Since the sequence $\{\chi^{n_l}\}_{l=1}^\infty$  converges to $\chi^*$, by \cite[Proposition 5.1.8]{ambrosio_gradient_flows}  there exists a sequence of motions $(x^{n_l}(\cdot),z^{n_l}(\cdot))\in {\rm supp}(\chi^{n_l})$ such that $(x^{n_l}(\cdot),z^{n_l}(\cdot))\rightarrow (x^*(\cdot),z^*(\cdot))$ as $l\rightarrow \infty$. We have that $(x^{n_l}(\cdot),z^{n_l}(\cdot))$ satisfies the differential inclusion
$$(\dot{x}(t),\dot{z}(t))\in {\rm co}\{(f(t,x(t),\mu^{n_l}[t],u),g(t,x(t),\mu^{n_l}[t],u)):u\in U\}. $$ From this and (\ref{mu_star_limit_zeta}) it follows that the pair $(x^*(\cdot),z^*(\cdot))$ satisfies inclusion (\ref{limit_incl}). Consequently, $${\rm supp}(\chi^*)\subset{\rm Sol}_0(\mu^*).$$

By construction we have that $V^*$ is a minimax solution to (\ref{HJ}) and $\mu^*=e_t{}_\#\chi^*$.

It remains to prove (\ref{optimal_trajectories}).
To this end let us introduce the functions $\mathcal{R}^n(s)$ and $\mathcal{R}^*(s)$ by the following rules:
$$\mathcal{R}^n(s)=\int_{\mathcal{C}}[\sigma(x(T),\mu^n[T])+z(T)-z(s)- \widehat{V}^n(s,x(s)) ]\chi^n(d(x(\cdot),z(\cdot))),$$
$$\mathcal{R}^*(s)=\int_{\mathcal{C}}[\sigma(x(T),\mu^*[T])+z(T)-z(s)- V^*(s,x(s)) ]\chi^*(d(x(\cdot),z(\cdot))).$$
We have that, for $(x(\cdot),z(\cdot))\in\mathrm{supp}(\chi^*)\subset\mathrm{Sol}_0(\mu_*)$,
\begin{equation}\label{sigma_Val_estima}
  \sigma(x(T),\mu^*[T])+z(T)-z(s)- V^*(s,x(s))\leq 0.
\end{equation}
Consequently,
\begin{equation}\label{R_estima_upper}
  \mathcal{R}^*(s)\leq 0.
\end{equation}

Now we prove the opposite inequality. First, note that there exists a constant $c_5^*$ such that
\begin{equation*}
\begin{split}
\bigl|\sigma(x&[T,\mu^n,s,\xi_1,v],\mu^n[T])+z[T,\mu^n,s,\xi_1,v]\\&-\sigma(x[T,\mu^n,s,\xi_2,v],\mu^n[T])-z[T,\mu^n,s,\xi_2,v]\bigr|\leq c_5^*\|\xi_1-\xi_2\|(1+\|\xi_1\|+\|\xi_2\|).
\end{split}
\end{equation*}
This   
 implies that
\begin{equation*}\label{ineq:V_hat_lip}
\bigl|\widehat{V}^n(s,\xi_1)-\widehat{V}^n(s,\xi)\bigr|\leq c_5^*\|\xi_1-\xi_2\|(1+\|\xi_1\|+\|\xi_2\|).
\end{equation*}
Therefore, using Lemmas \ref{lm_mu_estima}, \ref{lm_zeta_admiss}, \ref{lm_mu_admiss}, \ref{lm_motions} for
$\Omega=\Omega^n$, $\mathcal{F}=\mathcal{F}^n$, $\mathcal{F}_t=\mathcal{F}^n_t$, $P=P^n$, $u=u^n$,  $Y=Y^n$, and $X=X^n$ we get
\begin{equation}\label{ineq:exp_V_hat_lip}
\mathbb{E}^n\bigl|\widehat{V}^n(s,X^n(s))-\widehat{V}^n(s,Y^n(s))\bigr|\leq c_6^*\sqrt{\varepsilon^n},
\end{equation} where $c_6^*\triangleq c_5^*C_6(\sqrt{1+M_0})(1+\sqrt{C_1}+\sqrt{C_2})$.

The construction of probabilities $\chi^n$ yields that
$$\mathcal{R}^n(s)=\mathbb{E}^n\left[\sigma(X^n(T),\mu^n[T])+\int_s^Tg(t,X^n(t),\mu^n[t],u^n(t))dt- \widehat{V}^n(s,X^n(s))\right]. $$ By  (\ref{ineq:V_n_V_hat_lower})  and (\ref{ineq:exp_V_hat_lip})  we have that
\begin{multline*}\mathcal{R}^n(s)\geq \mathbb{E}^n\left[\sigma(X^n(T),\mu^n[T])+\int_s^Tg(t,X^n(t),\mu^n[t],u^n(t))dt-V^n(s,Y^n(s))\right]\\- (c_6^*+c_7^*)\sqrt{\varepsilon^n}. \end{multline*}
Here (see (\ref{ineq:V_n_V_hat_lower}) and Lemma \ref{lm_mu_estima}) $c_7^*\triangleq c_4^*(1+C_2)$. Using Lemma \ref{lm_mu_estima} and inequality (\ref{ineq:sigma_int_g}) for $\Omega=\Omega^n$, $\mathcal{F}=\mathcal{F}^n$, $\mathcal{F}_t=\mathcal{F}^n_t$, $P=P^n$, $u=u^n$,  $Y=Y^n$, and $X=X^n$, we obtain
\begin{multline*}\mathcal{R}^n(s)\geq \mathbb{E}^n\left[\sigma(Y^n(T),\zeta^n[T])+\int_s^Tg(t,Y^n(t),\zeta^n[t],u^n(t))dt-V^n(s,Y^n(s))\right]\\- (c_6^*+2c_7^*)\sqrt{\varepsilon^n}. \end{multline*}
Finally, condition (iii) of Definition \ref{def_probability} and the choice of $(\Omega^n,\mathcal{F}^n,\{\mathcal{F}^n_t\}_{t\in [0,T]},P^n,u^n,Y^n)$
yield
\begin{equation}\label{R_n_estima}
  \mathcal{R}^n(s)\geq - (c_6^*+2c_7^*)\sqrt{\varepsilon^n}.
\end{equation}

From (\ref{measure_convergence}), (\ref{estima:V_n_V_star}) and Lemma \ref{lm_mu_estima} it follows that
$$\mathcal{R}^{n_l}\rightarrow \mathcal{R}^*. $$ Hence, using (\ref{R_n_estima}) we get that
$\mathcal{R}^*(s)\geq 0$.  This and (\ref{R_estima_upper}) imply the equality ${\mathcal{R}^*(s)=0}$.
Therefore, by inequality (\ref{sigma_Val_estima}) we get that, given $s\in [0,T]$,
$|\sigma(x(T),\mu^*[T])+z(T)-z(s)-V^*(s,x(s)) |=0$ for $\chi^*$-a.e. $(x(\cdot),z(\cdot))$. 

Using Fubini's theorem, we conclude that
$$\int_{\mathcal{C}}\int_0^T\left|\sigma(x(T),\mu^*[T])+z(T)-z(s)- V^*(s,x(s)) \right|ds\chi^*(d(x(\cdot),z(\cdot)))=0. $$

This implies that inclusion (\ref{optimal_trajectories}) is fulfilled. Thus, the pair $(V^*,\mu^*)$ is a minimax solution to (\ref{HJ}), (\ref{kinetic}).

\end{proof}

\begin{proof}[Proof of Corollary 1]
The conclusion of the Corollary directly follows  from Theorem~1 and Proposition 1.
\end{proof}

\begin{proof}[Proof of Corollary 2]
Note that if the pair $(V^*,\mu^*)$ solves (\ref{HJ}), (\ref{kinetic}) in the probabilistic sense, then by Theorem 1 it is a solution to (\ref{HJ}), (\ref{kinetic}) in the minimax sense.

Now assume that $(V^*,\mu^*)$ satisfies conditions of Proposition~\ref{pl_minimax_suff}
and the set
$\{(f(t,x,m,u),g(t,x,m,u)):u\in U\}$ is convex. We have that if $(x(\cdot),z(\cdot))\in {\rm Sol}_0(\mu^*)$, then there exists a deterministic control $u_{x(\cdot),z(\cdot)}$ such that
\begin{equation*}
\dot{x}(t)=f(t,x(t),\mu^*[t],u_{x(\cdot),z(\cdot)}(t)),\ \  \dot{z}(t)=g(t,x(t),\mu^*[t],u_{x(\cdot),z(\cdot)}(t))
\end{equation*}
Put $\Omega\triangleq \mathcal{C}$,  $\mathcal{F}_t\triangleq \mathcal{B}(C([0,t],\mathbb{R}^{d+1}))$ (here $\mathcal{B}$ stands for the Borel $\sigma$-algebra), $\mathcal{F}\triangleq \mathcal{F}_T$, $P\triangleq \chi$. For $\omega=(x(\cdot),z(\cdot))$, set $u(t,\omega)\triangleq u_{x(\cdot),z(\cdot)}$, $Y(t,\omega)\triangleq x(t)$. Thus, $(V^*,\mu^*)$ solves (\ref{HJ}), (\ref{kinetic}) in the probabilistic sense.
\end{proof}

\section*{}
\noindent\textbf{Acknowledgement.}
The author also would like to thank the anonymous referee for her/his valuable comments and suggestions.

\nocite{*}

\bibliography{deterministic_limit}

\end{document}